\newtheorem{prop}{Proposition}
\newtheorem{theorem}{Theorem}
\newtheorem{coroll}{Corollary}
\newtheorem{lemma}{Lemma}
\newcommand{\BRW}{\mbox{BRW}}
\newcommand{\ig}{[\hspace{-1.5pt}[}
\newcommand{\id}{]\hspace{-1.5pt}]}
\def\floor#1{\lfloor #1 \rfloor}
\def\ceil#1{\lceil #1 \rceil}
\newcommand{\Z}{\mathbb{Z}}
\newcommand{\N}{\mathbb{N}}
\newcommand{\C}{\mathcal{C}}
\newcommand{\un}{\mathbf{1}}
\renewcommand{\P}{\mathbb{P}}
\newcommand{\E}{\mathbb{E}}
\newcommand{\R}{\mathbb{R}}
\newcommand{\F}{\mathcal{F}}
\newcommand{\Q}{\mathbb{Q}}
\newcommand{\T}{\mathcal{T}}
\newcommand{\TT}{\mathfrak{T}}
\author{Jean B\'{e}rard,  Jean-Baptiste Gou\'{e}r\'{e}}
\address[Jean B\'{e}rard]{\noindent Institut Camille Jordan, UMR CNRS 5208, 43, boulevard du 11 novembre
1918, Villeurbanne, F-69622, France; universit\'{e} de Lyon, Lyon, F-69003, France; 
universit\'{e} Lyon 1, Lyon, F-69003, France
\newline
e-mail:  \rm \texttt{jean.berard@univ-lyon1.fr}}
\address[Jean-Baptiste Gou\'{e}r\'{e}]{\noindent Laboratoire MAPMO -
  UMR 6628, Universit\'e d'Or\-l\'eans, B.P. 6759, 45067 Orl\'eans Cedex
  2, France. 
\newline 
E-mail: \rm
  \texttt{Jean-Baptiste.Gouere@univ-orleans.fr}.}
\date{}
\title[Brunet-Derrida behavior of particle systems]{Brunet-Derrida behavior of branching-selection particle systems on the line}
\begin{document}

\begin{abstract}
We consider a class of branching-selection particle systems on $\R$ similar to the one considered by E. Brunet and B. Derrida in their 
1997 paper "Shift in the velocity of a front due to a cutoff". Based on numerical simulations and heuristic arguments, 
Brunet and Derrida showed that, as the population size $N$ of the particle system goes to infinity, the asymptotic velocity of the system converges to 
a limiting value at the unexpectedly slow rate $(\log N)^{-2}$. In this paper, we give a rigorous mathematical proof of this fact, for the class of particle systems we consider.
The proof makes use of ideas and results by R. Pemantle, and by N. Gantert, Y. Hu and Z. Shi, and relies on a comparison of the particle system with a family of $N$ independent
branching random walks killed below a linear space-time barrier. 
\end{abstract}

\maketitle

\section{Introduction}

\subsection{Brunet-Derrida behavior}

In~\cite{BruDer1, BruDer2}, E. Brunet and B. Derrida studied, among other things, a discrete-time particle system on $\Z$, 
in which a population of particles with fixed size $N$ undergoes repeated steps of branching and selection. As time goes to infinity, the population of $N$
particles, taken as a whole, moves ballistically, with an asymptotic speed depending on the population size $N$. One remarkable property of this system is the following: 
as $N$ goes to infinity, the asymptotic speed of the population of particles converges to a limiting value,  
but at the unexpectedly slow rate of $(\log N)^{-2}$, bringing to light an unusually large finite-size effect. This behavior was, on the one hand, observed 
by Brunet and Derrida on direct numerical simulations of the particle system (with large numbers of particles, up to $N=10^{16}$).
On the other hand, Brunet and Derrida provided a justification for this behavior through the following argument. First, in the limit where $N$ goes to infinity, 
the time-evolution of the distribution of particles in the branching-selection system is governed by a deterministic equation, which can be viewed as a discrete version of the 
well-known F-KPP equation
\begin{equation}\label{e:F-KPP}\frac{\partial u}{\partial t}  =  \Delta u + u(1-u),\end{equation} 
where $u=u(x,t)$, $x \in \R$,  $t \geq 0$. 
 To account for the fact that there is only a finite number $N$ of particles in the system instead of an infinite one -- whence a resolution equal to $1/N$ for representing distributions of mass --, 
 one may introduce a cut-off value of $1/N$ in the equation, and expect that this modified equation still reflects at least some of the behavior of the original particle system. 
 Whence the question of studying, for large $N$, an equation of the form:
\begin{equation}\label{e:F-KPP-cutoff}\frac{\partial u}{\partial t}  =  \Delta u + u(1-u) \un( u \geq 1/N).\end{equation} 
In fact, Brunet and Derrida could provide heuristic arguments for this new problem, showing that, for large $N$, 
the effect of the cut-off is to shift the speed of travelling wave solutions of Equation~(\ref{e:F-KPP-cutoff}) from the speed of those of Equation~(\ref{e:F-KPP}), 
by an amount of order $(\log N)^{-2}$. In turn, these arguments were supported by numerical simulations of (discrete versions of) Equation~(\ref{e:F-KPP-cutoff}). 
This result concerning the F-KPP equation with cut-off has recently been given a rigorous mathematical proof, see~\cite{BenDep2, BenDep1, DumPopKap}. 

A related question (see~\cite{BruDer3}), is that of the behavior of the F-KPP equation with small noise, i.e. of the equation
\begin{equation}\label{e:F-KPP-noise}\frac{\partial u}{\partial t}  =  \Delta u + u(1-u)+\sqrt{\frac{u(1-u)}{N}} \dot{W},\end{equation} 
where $\dot{W}$ is a standard space-time white-noise,  and $N$ is large. 
Rigorous results have recently been derived for this model too, see~\cite{ConDoe, MueMytQua, MueMytQua2}, establishing that the speed of the random 
travelling wave solutions of Equation~(\ref{e:F-KPP-noise}) is, for large $N$, shifted from the speed of those of~(\ref{e:F-KPP}) by an amount of order $(\log N)^{-2}$.

We thus have (at least) three examples of what may be called Brunet-Derrida behavior, in three different and more or less loosely related frameworks (branching-selection particle systems, F-KPP-equation with cut-off, F-KPP equation with noise), 
two of which have already been established rigorously.  

\subsection{Main result}\label{ss:main}

The goal of this paper is to give a proof of Brunet-Derrida behavior for a class of branching-selection systems that is similar  (but not exactly identical) to the one originally studied 
by Brunet and Derrida in~\cite{BruDer1, BruDer2}.
 
 To be specific, we consider a discrete-time particle system with $N$ particles on $\R$ evolving through the repeated application of branching and selection steps defined as follows:
\begin{itemize}
\item Branching: each of the $N$ particles is replaced by two new particles, whose positions are shifted from that of the original particle by independently
 performing two random walk steps, according to a given distribution $p$;
\item Selection:  only the $N$ rightmost particles are kept among the $2N$ obtained at the branching step, to form the new population of $N$ particles.
\end{itemize}

Our assumptions on the random walk distribution $p$ are listed below, and come from the need to apply the result of the paper~\cite{GanHuShi} by N. Gantert, Y. Hu and Z. Shi on 
the survival probability of the branching random-walk killed below a linear space-time boundary, in the special case of deterministic binary branching.

Introduce the logarithmic moment generating function of $p$ defined by 
$$\Lambda(t):= \log \int \exp(tx) dp(x).$$ 
Here are the assumptions on $p$:
\begin{itemize}
\item[(A1)]  The number $\sigma :=  \sup \{ t \geq 0; \ \Lambda(-t)<+\infty \}$ is $>0$.
\item[(A2)]  The number $\zeta:= \sup \{ t \geq 0; \ \Lambda(t)<+\infty \}$ is $>0$.
\item[(A3)]  There exists $t^{*} \in ]0,\zeta[$ such that $t^{*} \Lambda'(t^{*}) - \Lambda(t^{*}) = \log 2$.
\end{itemize}
Under these assumptions, both numbers $$\chi(p):=\textstyle{\frac{\pi^{2}}{2}} t^{*} \Lambda''(t^{*}), \  v(p):=\Lambda'(t^{*})$$
are well-defined, and satisfy $0<\chi(p)<+\infty$ and  $v(p) \in \R$. 
Simple cases for which these assumptions hold are e.g. the Bernoulli case  for $\alpha \in ]0,1/2[$, where 
$p=\alpha \delta_{1} + (1-\alpha) \delta_{0}$, the uniform case, where $p$ is the uniform distribution on the interval $[0,1]$, and the gaussian case, where $p$ is the standard 
Gaussian distribution on $\R$. 

In Section~\ref{s:elementary} below, it is proved that, after a large number of iterated branching-selection steps, 
the displacement of the whole population of $N$ particles is ballistic, with deterministic asymptotic speed $v_{N}(p)$, 
and that, as $N$ goes to infinity, $v_{N}(p)$ increases to a limit $v_{\infty}(p)$, which  turns out to be equal to the $v(p)$ defined above, and is thus finite under our assumptions.
The main result concerning the branching-selection particle system is the following theorem:
\begin{theorem}\label{t:BD}
Assume that (A1)-(A2)-(A3) hold. Then, as $N$ goes to infinity,
\begin{equation}\label{e:BD}  v_{\infty}(p) - v_{N}(p) \sim \chi(p) (\log N)^{-2}.\end{equation}
\end{theorem}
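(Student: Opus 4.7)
The plan is to compare the branching-selection particle system with $N$ independent branching random walks (BRWs) killed upon crossing below a linear space-time barrier of slope $v$, and to exploit the sharp asymptotic from~\cite{GanHuShi} for the survival probability $\rho(v)$ of a single such killed BRW,
\begin{equation*}
-\log \rho(v_{\infty}(p) - \epsilon) \sim \pi\sqrt{t^{*}\Lambda''(t^{*})/(2\epsilon)} = \sqrt{\chi(p)/\epsilon} \qquad (\epsilon \to 0^{+}).
\end{equation*}
Inverting this, $N\rho(v_{\infty}(p) - \epsilon)$ passes from $\to 0$ to $\to \infty$ as $\epsilon$ crosses $\chi(p)/(\log N)^{2}$, which is precisely the mechanism that will produce~(\ref{e:BD}). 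By the monotonicity and almost-sure convergence results announced in Section~\ref{s:elementary}, it suffices to prove the matching bounds $v_{\infty}(p) - v_{N}(p) \geq (1-\delta)\chi(p)/(\log N)^{2}$ and $v_{\infty}(p) - v_{N}(p) \leq (1+\delta)\chi(p)/(\log N)^{2}$ for every $\delta > 0$ and all sufficiently large $N$.

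For the lower bound on $v_{\infty}(p) - v_{N}(p)$, fix $\delta > 0$ and set $\epsilon = (1-\delta)\chi(p)/(\log N)^{2}$, so that $N\rho(v_{\infty}(p) - \epsilon) \to 0$. I would argue by contradiction: suppose $v_{N}(p) > v_{\infty}(p) - \epsilon$. Realize the particle system inside $N$ independent BRWs, one rooted at each initial particle. By the almost-sure ballistic behaviour of the system, there is almost surely a finite shift $C > 0$ such that every particle ever appearing in the system lies above the shifted barrier $t \mapsto (v_{\infty}(p) - \epsilon) t - C$; hence the full ancestry path of every particle present in the system at any time also lies above this barrier. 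Since at every time the system occupies at least one of the $N$ coupled BRWs, pigeonholing the occupied-BRW sequence along $T \to \infty$ and applying K\"onig's lemma produces an infinite line of descent above the shifted barrier in at least one of the $N$ BRWs. A union bound against $N\rho$ (in a form uniform in bounded starting height) gives probability $o(1)$, contradicting the almost-sure occurrence of this event for $N$ large.

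For the upper bound on $v_{\infty}(p) - v_{N}(p)$, fix $\delta > 0$ and set $\epsilon = (1+\delta)\chi(p)/(\log N)^{2}$, so that $N\rho(v_{\infty}(p) - \epsilon) \to \infty$. The plan is to couple the system with $N$ independent BRWs killed below the barrier of slope $v_{\infty}(p) - \epsilon$: with probability $1 - o(1)$ at least one of these survives and, being supercritical relative to the barrier, produces exponentially many descendants above it. A monotone coupling then ensures that the pool of $2N$ offspring of the current system particles contains at least $N$ representatives above the barrier, so that after selection the entire system remains above the barrier, yielding $v_{N}(p) \geq v_{\infty}(p) - \epsilon$.

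The main obstacle is the coupling used in this last step. Selection in the branching-selection system acts on offspring of the \emph{current} system particles, not of the independently running killed BRWs; arranging a coupling under which survival of the killed BRWs forces the particle system to stay above the barrier therefore requires quantitative growth estimates for killed BRWs above criticality, guaranteeing that with overwhelming probability enough killed-BRW survivors sit inside the offspring of each system particle at every time. Controlling the initial transient before the system enters its ballistic regime, and upgrading the Gantert--Hu--Shi asymptotic to a form uniform in a (possibly slowly growing) starting height for use in the other direction, supply the remaining technical work.
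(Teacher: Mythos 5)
Your high-level plan matches the paper's: embed the $N$-particle system in $N$ independent $\BRW$s as in Section~\ref{ss:cwafonbrw}, compare with killed BRWs, and read off the $(\log N)^{-2}$ scale from the Gantert--Hu--Shi asymptotics via the criterion $N\rho \asymp 1$. But the execution differs in one place where your route is close to the paper's and fine, and in another where it is genuinely different and has a gap you have not closed.

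For the \emph{upper bound} on $v_{\infty}(p)-v_{N}(p)$ (showing $v_N \geq v(p)-\epsilon$), your concern about coupling the system to a separate family of killed BRWs is actually dissolved rather than solved in the paper: there is no separate killed process. Working inside the same $\BRW_i$'s that carry the particle system, one simply observes that on the event $B=\{\min X^{N}_{k} < (v(p)-\delta)k \text{ for all } k \leq n\}$, the selection threshold sits below the barrier at every step, so no vertex on a descending path staying above the barrier can ever be discarded; all such paths end in $G^{N}_{n}$. Combined with the Galton--Watson growth step (Lemma~\ref{l:athreya-ney}), $B\cap A^{c}$ would then force $|G^{N}_{n}|>N$, hence $B\subset A$. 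This is precisely the "quantitative growth estimate" you gesture at, and it works out; the remaining step (Proposition~\ref{p:borne-inf-B}) is a regeneration argument to turn $\P(B)$ small into a bound on $v_N(p)$.

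For the \emph{lower bound} on $v_{\infty}(p)-v_{N}(p)$ (showing $v_N \leq v(p)-\epsilon+o(\epsilon)$), your argument is substantively different from the paper's and has a real gap. You assume $v_{N}(p) > v_{\infty}(p)-\epsilon$, produce a.s. a random shift $C$ with $\min X^{N}_{n}\geq (v_\infty(p)-\epsilon)n - C$ for all $n$, extract via K\"onig an infinite ray above the $C$-shifted barrier in some $\BRW_i$, and want to contradict $N\rho(\infty,\epsilon)=o(1)$. But $C$ is random with no tail control, and for a deterministic $C_0$ the head-start survival probability $\rho_{C_0}(\infty,\epsilon)$ is not comparable to $\rho(\infty,\epsilon)$: it tends to $1$ as $C_0\to\infty$, so the union bound needs a delicate balance of $\P(C>C_0)$ against $N\rho_{C_0}(\infty,\epsilon)$, requiring both a uniform-in-head-start refinement of~\cite{GanHuShi} (not in that paper) and a concentration bound on $\min X^{N}_{n}$ that does not follow from the mere a.s.\ convergence $n^{-1}\min X^{N}_{n}\to v_N(p)$. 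The paper sidesteps all of this by working over a \emph{finite} horizon $n=\floor{N^{\xi}}$ and using Pemantle's lemma (Lemma~\ref{l:beaucoup-de-bien}): if $\max X^{N}_{n}\geq v_{2}n$ then the ancestry of the maximizer contains a positive density of $(m,v_1)$-good starting points, and since "$u\in G^{N}_{0}\cup\cdots\cup G^{N}_{n}$" and "$u$ is good" are independent (one depends on increments at depth $\leq\ell$, the other at depth $\geq\ell$), a first-moment count gives $\P(\max X^{N}_{n}\geq v_{2}n)\lesssim Nn\,\rho(m,\epsilon)$ with the finite-$m$ estimate of Theorem~\ref{t:borne-bien}. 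Combined with exponential tail bounds and the subadditivity $v_N(p)\leq n^{-1}\E(\max X^{N}_{n})$, this closes the bound without ever mentioning infinite-time survival or a random shift. I would either supply the uniform head-start estimate and the trajectory concentration you would need, or switch to the Pemantle finite-horizon argument, which is markedly shorter.
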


\subsection{Credits}

The proof of Theorem~\ref{t:BD} given in this paper is based on a comparison of the particle system with a family of $N$ independent
branching random walks killed below a linear space-time barrier, and makes use in a crucial way of ideas and results from the following two sources: 
the paper~\cite{Pem} by R. Pemantle on complexity bounds for algorithms seeking near optimal paths in branching
random walks, and the paper~\cite{GanHuShi} by Gantert, Hu and Shi on the survival probability of the branching random-walk killed below a linear space-time boundary. 
A detailed description of exactly which ideas and results are used and how is given in Sections~\ref{s:BRW}, \ref{s:lower} and~\ref{s:upper} below. 
Note that the existence of a link between the 
Brunet-Derrida behavior of a branching-selection particle system such as the
 one studied here, and the asymptotics of the survival probability for branching random walks killed below a linear space-time barrier, was already suggested in the 
 papers~\cite{DerSim1, DerSim2} by B. Derrida and D. Simon, where Brunet-Derrida-like features were observed for a quasi-stationary regime of killed branching random walks; 
the present paper gives an explicit and rigorous version of such a relation.

Finally, let us mention that a first version~\cite{Ber} of the present work was completed by one of the authors (J.B.) before the results in~\cite{GanHuShi} became publicly available. 
In~\cite{Ber}, only the $(\log N)^{-2}$ order of magnitude of the  difference $v_{\infty}(p) - v_{N}(p)$ in the Bernoulli case was established. 
The results in~\cite{GanHuShi} then allowed us to prove Theorem~\ref{t:BD}, which is both more precise and more general. 

\subsection{Organization of the paper}
 
The rest of the paper is organized as follows. In Section~\ref{s:not-def}, we provide the precise notations and definitions that are needed 
in the sequel. Section~\ref{s:elementary} contains a discussion of various elementary properties of the model we consider. Section~\ref{s:BRW} collects the results from~\cite{GanHuShi} that
are used in the sequel.
Section~\ref{s:lower} contains the proof of the lower bound part of Theorem~\ref{t:BD}, while Section~\ref{s:upper} contains the proof of the upper bound part.
Section~\ref{s:autres-cas} discusses the Bernoulli($\alpha$) case for $\alpha \geq 1/2$, showing that the conclusion of Theorem~\ref{t:BD} may fail to hold
when Assumption (A3) is not met. 
Section~\ref{s:discussion} is an attempt to provide a self-contained explanation of the $(\log N)^{-2}$ order of magnitude 
appearing in Theorem~\ref{t:BD}. The arguments in this section are only discussed in an informal way. 

\section{Notations and definitions}\label{s:not-def}

\subsection{Particle systems on $\R$}

It is convenient to represent finite populations of particles by finite counting measures on 
$\R$. We use the notation $\C$ to represent the set of all finite counting measures on $\R$.

For $\nu \in \C$, the total mass of $\nu$ (i.e. the number of particles in the population it describes) is denoted by $M(\nu)$. We 
denote by $\max \nu$ and $\min \nu$ respectively the maximum and minimum of the (finite) support of $\nu$.  
We also define the diameter $d(\nu):=\max \nu - \min \nu$. 
Given $\mu, \nu \in \C$, we use the notation $\prec$ to denote the usual stochastic ordering: $\mu \prec \nu$ if and only if
$\mu([x,+\infty[) \leq \nu([x,+\infty[)$ for all $x \in \R$. In particular, $\mu \prec \nu$ implies that $M(\mu) \leq M(\nu)$, and it is easily seen that, 
if $\mu=\sum_{i=1}^{M(\mu)} \delta_{x_{i}}$ and $\nu=\sum_{i=1}^{M(\nu)} \delta_{y_{i}}$, with $x_{1}\geq \cdots \geq x_{M(\mu)}$ and
$y_{1} \geq \cdots \geq y_{M(\nu)}$, $\mu \prec \nu$ is equivalent to $M(\mu) \leq M(\nu)$ and $x_{i} \leq y_{i}$ for all $i \in \ig  1, M(\mu)\id$.

For all $N \geq 1$, let $\C_{N}$ denote the set of finite counting measures on $\R$ with total mass equal to $N$.
In the sequel, we use the notation $(X^{N}_{n})_{n \geq 0}$ to denote a Markov chain on $\C_{N}$ whose transition probabilities are given by 
the branching-selection mechanism with $N$ particles defined in Section~\ref{ss:main}, 
and which starts at a deterministic value $X^{N}_{0} \in \C_{N}$. 
We assume that this Markov chain is defined on a reference probability space denoted by $(\Omega,\F,\P)$.

\subsection{Branching random walks}

In the sequel, we use the notation $\BRW$ to denote a generic branching random walk on a regular rooted binary tree, with value zero 
at the root, and i.i.d. displacements with common distribution $p$ along each edge. More formally, $\BRW$ consists of a pair
$(\TT, \Phi)$, where $\TT$ is a regular rooted binary tree, and $\Phi$ is a random map, associating to each vertex $u \in \TT$ a random variable $\Phi(u) \in \R$
in such a way that $\Phi(root)=0$ and that the collection $\left(\Phi(v)-\Phi(u) \right)_{(u,v)}$ is i.i.d. with common distribution $p$, where $(u,v)$ runs over the set of pairs of vertices
of $\TT$ such that $u$ is the father of $v$. We say that $\Phi(u)$ is the value of the branching random walk at vertex $u$. The probability measure governing $\BRW$ 
 is denoted by $\Q$.

Given $m \geq 1$, we say that a sequence $u_{0},\ldots, u_{m}$ of vertices in $\TT$ is a descending path if, for all $i \in \ig 1,  m\id$, $u_{i-1}$ is the father of $u_{i}$. 
The set of vertices of $\TT$ located at depth $m$ is denoted by $\TT(m)$.

\section{Elementary properties of the model}\label{s:elementary}

As a first quite elementary property, note that, from Assumptions (A1) and (A2), $\E(\max X^{N}_{n})$ and   $\E(\min X^{N}_{n})$ are finite for all $n \geq 0$, 
for any choice of the (deterministic) initial condition $X^{N}_{0} \in \C_{N}$.

\subsection{Estimates on the diameter}

\begin{prop}\label{p:diam-borne}
Let $u_{N}:=\ceil{\frac{\log N}{\log 2}}+1$.  For all $N \geq 1$, all initial population $X^{N}_{0} \in \C_{N}$, and all $n \geq u_{N}$, $ d(X^{N}_{n})$ is stochastically dominated by 
$u_{N} \times (m_N^{(2)}-m_N^{(1)})$, where $m_N^{(2)}$ and  $m_N^{(1)}$ are respectively the maximum and the minimum of a family of $2Nu_{N}$ i.i.d. random variables with common distribution $p$. 
 \end{prop}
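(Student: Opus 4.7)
The plan is to prove an almost-sure coupled upper bound on $d(X^N_n)$ that immediately yields the stated stochastic domination. Write $y_i^{(k)}$ for the $i$-th largest position in $X^N_k$ (so $y_1^{(k)} = \max X^N_k$ and $y_N^{(k)} = \min X^N_k$), and let $m_1^{(k)}$, $m_2^{(k)}$ denote respectively the minimum and the maximum of the $2N$ i.i.d.\ $p$-distributed displacements applied during the branching step at time $k$.

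The heart of the argument is a one-step inequality
\begin{equation*}
y_i^{(k)} \geq y_{\lceil i/2 \rceil}^{(k-1)} + m_1^{(k)} \qquad (1 \leq i \leq N).
\end{equation*}
The justification is purely combinatorial: the top $\lceil i/2 \rceil$ parents at time $k-1$ each produce two offspring after branching, totaling $2\lceil i/2 \rceil \geq i$ offspring, all at positions at least $y_{\lceil i/2 \rceil}^{(k-1)} + m_1^{(k)}$. Hence at least $i$ of the $2N$ branched particles lie at or above that threshold, and since $i \leq N$ the selection step retains all of them, so $y_i^{(k)}$—the $i$-th largest position among the top $N$—lies above the threshold as well.

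Next, I would iterate this inequality $u_N$ times starting from $i = N$. The halving sequence $N, \lceil N/2 \rceil, \lceil\lceil N/2 \rceil/2 \rceil, \ldots$ reaches $1$ in at most $\lceil \log_2 N \rceil$ iterations and then remains at $1$ forever (since $\lceil 1/2 \rceil = 1$). As $u_N = \lceil \log_2 N \rceil + 1$, after exactly $u_N$ iterations (allowed by the hypothesis $n \geq u_N$) one gets
\begin{equation*}
\min X^N_n = y_N^{(n)} \geq y_1^{(n-u_N)} + \sum_{k=n-u_N+1}^{n} m_1^{(k)} \geq \max X^N_{n-u_N} + u_N\, m_N^{(1)},
\end{equation*}
where $m_N^{(1)}$ is the minimum over the $2Nu_N$ displacements used in the last $u_N$ steps. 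In parallel, the trivial one-step bound $\max X^N_k \leq \max X^N_{k-1} + m_2^{(k)}$ iterates to $\max X^N_n \leq \max X^N_{n-u_N} + u_N\, m_N^{(2)}$, with $m_N^{(2)}$ the maximum of the same $2Nu_N$ displacements. Subtracting the two bounds cancels the (possibly very large) common term $\max X^N_{n-u_N}$ and gives $d(X^N_n) \leq u_N\,(m_N^{(2)} - m_N^{(1)})$ almost surely. Since the $2Nu_N$ displacements are i.i.d.\ $p$-distributed, this is precisely the claimed stochastic domination (in fact, via an explicit coupling).

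The only substantive step is the one-step inequality; everything else is iteration and subtraction. The point of the hypothesis $n \geq u_N$ is exactly to let the halving-index chain collapse to $1$, so that the \emph{maximum} at time $n-u_N$—rather than the minimum—appears on the right-hand side and then cancels in the subtraction, making the bound independent of the initial configuration $X^N_0$.
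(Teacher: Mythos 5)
Your proof is correct, and it reaches the paper's conclusion by a somewhat different route. Both proofs ultimately rest on the same combinatorial fact — over $u_N$ branching steps a single particle would generate $2^{u_N}>N$ descendants — but the way that fact is deployed differs. The paper argues by contradiction: it supposes $\min X^N_k$ stays below the line $y+(k-n+u_N)m_N^{(1)}$ for all $k$ in the window, deduces that the entire descendant tree of a particle at $y=\max X^N_{n-u_N}$ survives every selection step, and finds more than $N$ survivors; it then combines the existence of a good index $k$ with the monotonicity of $t\mapsto\min X^N_t-(t-n+u_N)m_N^{(1)}$ to propagate the bound to time $n$. You instead prove a clean one-step order-statistic inequality $y_i^{(k)}\geq y_{\lceil i/2\rceil}^{(k-1)}+m_1^{(k)}$ and iterate it forward, letting the index halve down to $1$ in $u_N$ steps; this directly produces $\min X^N_n \geq \max X^N_{n-u_N} + u_N m_N^{(1)}$ with no contradiction and no separate monotonicity lemma. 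Your one-step lemma is slightly more informative (it bounds every order statistic, not just the minimum), and the forward iteration makes transparent why $u_N=\lceil\log_2 N\rceil+1$ is the right horizon. The one phrasing you might tighten: after establishing that at least $i$ of the $2N$ branched particles lie at or above the threshold, the conclusion is that the $i$-th largest of the top $N$ does — not that "the selection step retains all of them" — but the argument as intended is sound.
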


\begin{proof}
Consider $n \geq  u_{N}$, $y=\max X^{N}_{n-u_{N}}$, and let us study the evolution of the branching-selection system between times $n-u_N$ and $n$.
Define $m_N^{(1)}$ as the minimum of the $2Nu_N$ random walk steps 
performed by the system between times $n-u_N$ and $n$.

Consider first the possibility that $\min X^{N}_{k} < y + (k-n+u_N) m_N^{(1)}$ for all $k \in \ig n+1-u_{N} , n \id$. Since all the random walk steps that are performed during branching steps are $\geq m_N^{(1)}$, this implies
that all the particles descended by branching from a particle located at $y$ at time $n-u_{N}$, are preserved by the successive selection steps performed from $X^{N}_{n-u_{N}}$ to $X^{N}_{n}$.
Since there are at least $2^{u_{N}}>N$ such particles at time $n$, this is a contradiction. As a consequence, we know that there must be an index  $k \in \ig n+1-u_{N} , n \id$ such that 
$\min X^{N}_{k} \geq y + (k-n+u_N) m_N^{(1)}$. Again by the fact that random walk steps are $\geq  m_N^{(1)}$,  $t \mapsto \min X^{N}_{t}-  (t-n+u_N) m_N^{(1)}$ is non-decreasing on the interval  $\ig n+1-u_{N} , n \id$, so we deduce that $\min X^{N}_{n} \geq y+u_N  m_N^{(1)}$. 
Now, let $m_N^{(2)}$ denote the maximum of the $2N u_{N}$ random walk steps that are performed at the branching steps between time $n-u_{N}$ and time $n$. 
We see from the definition of $y$ that $\max X^{N}_{n} \leq y+u_{N} m_N^{(2)}$.  We have also just seen that  $\min X^{N}_{n} \geq y + u_N  m_N^{(1)}$, so that
 $d(X^{N}_{n})=\max X^{N}_{n}-\min X^{N}_{n} \leq u_{N}  (m_N^{(2)}- m_N^{(1)})$.

 \end{proof}

The following corollary is then a rather straightforward consequence, in view of Assumptions (A1) and (A2).
\begin{coroll}\label{c:diam-negligeable}
For all $N \geq 1$ and any initial population $X^{N}_{0} \in \C_{N}$, $\lim_{n \to +\infty}  n^{-1} d(X^{N}_{n})=0$, both with probability one and in $L^{1}(\P)$.
\end{coroll}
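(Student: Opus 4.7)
The plan is to read both convergences directly off the stochastic bound provided by Proposition~\ref{p:diam-borne}. For every $n\geq u_N$, that proposition furnishes a random variable $Y_n$ such that $d(X^N_n)\preceq Y_n$, where $Y_n$ has the law of $u_N\cdot(M-m)$ with $M$ and $m$ the maximum and minimum of $2Nu_N$ i.i.d.\ samples from $p$. The distribution of $Y_n$ depends on $N$ but is independent of $n$; call it $\mu_N$. Since $N$ is fixed throughout the corollary, the whole task reduces to proving integrability (for $L^1$) and an exponential tail bound (for the a.s.\ statement) for $\mu_N$.

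First I would extract bilateral exponential integrability of $p$ from (A1)--(A2): since $\Lambda(-t)<\infty$ and $\Lambda(t)<\infty$ on some common open neighbourhood of $0$, there exists $t_0>0$ such that $\int e^{t_0|x|}\,dp(x)<\infty$. For i.i.d.\ variables $Z_1,\dots,Z_k$ of law $p$ with $k=2Nu_N$, the crude bound $M-m\leq\sum_{i=1}^k|Z_i|$ gives
\begin{equation*}
\E\!\left[e^{t_0(M-m)}\right]\;\leq\;\prod_{i=1}^{k}\E\!\left[e^{t_0|Z_i|}\right]\;<\;+\infty.
\end{equation*}
In particular $\E[Y_n]=u_N\E[M-m]<+\infty$, a constant $C_N$ independent of~$n$.

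For the $L^1$ statement this is already enough: stochastic dominance implies $\E[d(X^N_n)]\leq C_N$ for all $n\geq u_N$, hence $\E[n^{-1}d(X^N_n)]\leq C_N/n\to 0$.

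For almost sure convergence, fix $\varepsilon>0$ and apply Markov's inequality with the exponential moment computed above: setting $s_N:=t_0/u_N$,
\begin{equation*}
\P\!\left(n^{-1}d(X^N_n)>\varepsilon\right)\;\leq\;\P(Y_n>\varepsilon n)\;\leq\;e^{-s_N\varepsilon n}\,\E\!\left[e^{s_N Y_n}\right],
\end{equation*}
which is summable in $n$. Borel--Cantelli then yields $\limsup_{n}n^{-1}d(X^N_n)\leq\varepsilon$ almost surely, and letting $\varepsilon$ run through a countable sequence tending to $0$ gives $\lim_n n^{-1}d(X^N_n)=0$ a.s. (recall $d\geq 0$). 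No real obstacle arises: the only delicate point is the very mild observation that (A1) and (A2) together force bilateral exponential integrability of $p$ in a neighbourhood of the origin, which is what makes the Borel--Cantelli step work.
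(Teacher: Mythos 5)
Your proof is correct and follows essentially the same route as the paper: apply the stochastic domination from Proposition~\ref{p:diam-borne}, use the resulting $n$-independent moment bound to get the $L^1$ convergence, and then get the a.s.\ convergence by Borel--Cantelli. The only (cosmetic) difference is at the Borel--Cantelli step, where you invoke exponential moments of $p$ via a Chernoff-type bound, while the paper gets the same summability more economically from just the finiteness of $\E(m_N^{(2)}-m_N^{(1)})$ through the elementary inequality $\sum_{n}\P(F_N\geq\iota n)\leq \E(F_N)/\iota$.
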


\begin{proof}
Using the notations of Proposition \ref{p:diam-borne}, let $F_N := m_N^{(2)}-m_N^{(1)}$.
From Assumptions (A1) and (A2), one deduces that $E(F_N)<+\infty$. Then, by Proposition  
\ref{p:diam-borne}, one has that, for all $n \geq u_N$, 
$\E(n^{-1}d(X^{N}_{n})) \leq E(F_N)/n$, so that convergence to $0$ in $L^{1}(\P)$ is proved.
Moreover, for any $\iota>0$, Proposition \ref{p:diam-borne} yields that $\sum_{n \geq u_N} \P(n^{-1}d(X^{N}_{n}) \geq \iota) \leq \sum_{n \geq u_N} P(F_N \geq \iota n  ) \leq E(F_N)/\iota$, so that convergence to $0$ $\P-$a.s. follows from the Borel-Cantelli lemma, since $\iota$ can be taken arbitrarily small.
\end{proof}

\subsection{Monotonicity properties}

The following lemma states a key monotonicity property of our branching-selection mechanism.

\begin{lemma}\label{l:kernel-monot}
For all $1 \leq N_{1} \leq N_{2}$, and $\mu_{1} \in \C_{N_{1}}$, $\mu_{2} \in \C_{N_{2}}$ such that $\mu_{1} \prec \mu_{2}$, there exists a pair of random variables
$(Z^{1},Z^{2})$ taking values in $\C_{N_{1}} \times \C_{N_{2}}$, such that:
\begin{itemize}
\item  the distribution of 
$Z^{i}$ for $i=1,2$ is that of the population of particles obtained by performing one branching-selection step (with $N_{i}$ particles) starting from the population $\mu_{i}$;
\item with probability one, $Z^{1} \prec Z^{2}$. 
\end{itemize}
\end{lemma}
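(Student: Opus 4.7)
The plan is to construct an explicit coupling driven by a shared array of displacement variables. First I would enumerate the supports of $\mu_1$ and $\mu_2$ in non-increasing order as $x_1 \geq \cdots \geq x_{N_1}$ and $y_1 \geq \cdots \geq y_{N_2}$; by the sorted-values characterization of $\prec$ recorded in Section~\ref{s:not-def}, the hypothesis $\mu_1 \prec \mu_2$ amounts to $N_1 \leq N_2$ together with $x_i \leq y_i$ for every $i \in \ig 1, N_1 \id$.

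Next I would draw an i.i.d. array $(\xi^{(j)}_i)_{1 \leq i \leq N_2,\, j \in \{1,2\}}$ of $p$-distributed random variables on an auxiliary probability space, and use $(\xi^{(1)}_i, \xi^{(2)}_i)$ as the pair of branching displacements attached to $y_i$ for every $i$, and also as the displacements attached to $x_i$ whenever $i \leq N_1$. This produces two intermediate post-branching counting measures $\widetilde{Z}^1$ and $\widetilde{Z}^2$ of total masses $2N_1$ and $2N_2$, with the correct marginal laws since for each process the displacements involved form an i.i.d. family of the required size and common law $p$. To verify $\widetilde{Z}^1 \prec \widetilde{Z}^2$, I would compare tail counts: for any $t \in \R$, every pair $(i,j)$ contributing to $\widetilde{Z}^1([t,+\infty[)$ satisfies $x_i + \xi^{(j)}_i \geq t$ and hence $y_i + \xi^{(j)}_i \geq t$, so the same pair contributes to $\widetilde{Z}^2([t,+\infty[)$.

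Finally I would let $Z^k$ denote the counting measure formed by the $N_k$ rightmost atoms of $\widetilde{Z}^k$, for $k=1,2$; these have the correct marginal laws by construction. Writing $a_1 \geq \cdots \geq a_{2N_1}$ and $b_1 \geq \cdots \geq b_{2N_2}$ for the sorted atoms of $\widetilde{Z}^1$ and $\widetilde{Z}^2$, the relation $\widetilde{Z}^1 \prec \widetilde{Z}^2$ is equivalent to $a_i \leq b_i$ for all $i \in \ig 1, 2N_1 \id$; restricting this to $i \in \ig 1, N_1 \id$ and invoking the same characterization once more gives $Z^1 \prec Z^2$, as required. I do not expect any real obstacle here: the only point that demands some care is checking that the coupling respects the prescribed branching-step marginals, which is immediate from the independence built into the array $(\xi^{(j)}_i)$ and from the fact that the monotonicity of $x_i \leq y_i$ is preserved under a common additive shift.
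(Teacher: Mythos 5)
Your construction is exactly the coupling used in the paper: a shared i.i.d. array of $p$-distributed displacements indexed by rank and offspring label, applied to the sorted atoms of both populations, followed by selection of the $N_k$ rightmost. The argument is correct, with slightly more detail than the paper on why both the branching step and the selection step preserve the ordering $\prec$.
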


\begin{proof}
Consider an i.i.d. family $(\varepsilon_{i,j})_{i \in \ig 1, N_{2} \id, \ j=1,2}$ with common distribution $p$. 
For $k=1,2$, write $\mu_{k}=\sum_{i=1}^{N_{k}} \delta_{x_{i}(k)}$, with
$x_{1}(k) \geq \ldots \geq x_{N_{k}}(k)$.
Then let $T_{k} := \sum_{i=1}^{N_{k}} \sum_{j=1,2} \delta_{x_{i}(k)+\varepsilon_{i,j}}$, and define $Z^{k}$ as being formed by the $N_{k}$
rightmost particles in $T_{k}$.
From the assumption that $\mu_{1} \prec \mu_{2}$, we deduce that
 $x_{i}(1) \leq x_{i}(2)$ for all $i \in \ig 1, N_{1}\id$, whence the fact that 
 $x_{i}(1)+\varepsilon_{i,j} \leq  x_{i}(2)+\varepsilon_{i,j}$, for all $1 \leq i \leq N_{1}$ and $j=1,2$.
 It is easy to deduce that $T_{1} \prec T_{2}$, whence $Z^{1} \prec Z^{2}$. 
The conclusion follows.
\end{proof}

An immediate corollary is the following.

\begin{coroll}\label{c:kernel-monot}
For all $1 \leq N_{1} \leq N_{2}$, and $\mu_{1} \in \C_{N_{1}}$, $\mu_{2} \in \C_{N_{2}}$ such that $\mu_{1} \prec \mu_{2}$, 
there exists a coupling $(Z^{1}_{n}, Z^{2}_{n})_{n \geq 0}$ between two versions of the branching-selection particle system, with $N_{1}$ and $N_{2}$ particles respectively, 
such that $Z^{1}_{0}:=\mu_{1}$,   $Z^{2}_{0}:=\mu_{2}$, and $Z^{1}_{n} \prec Z^{2}_{n}$ for all $n \geq 0$.
\end{coroll}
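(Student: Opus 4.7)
The plan is to bootstrap the one-step coupling supplied by Lemma~\ref{l:kernel-monot} into a full path-level coupling by iterating it through time, using the Markov property of the branching-selection dynamics. The statement is essentially ``stochastic monotonicity of the one-step kernel implies stochastic monotonicity of the Markov chain,'' so the argument is the standard Ionescu--Tulcea / Kolmogorov extension construction.

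More concretely, I would proceed by induction on $n$. At step $0$ we set $(Z^{1}_{0},Z^{2}_{0}):=(\mu_{1},\mu_{2})$, which by hypothesis satisfies $Z^{1}_{0}\prec Z^{2}_{0}$. Assume that an $\F_{n}$-measurable pair $(Z^{1}_{n},Z^{2}_{n})\in \C_{N_{1}}\times \C_{N_{2}}$ has been constructed such that, marginally, $(Z^{k}_{0},\ldots,Z^{k}_{n})$ has the law of the branching-selection chain with $N_{k}$ particles started from $\mu_{k}$, and such that $Z^{1}_{n}\prec Z^{2}_{n}$ almost surely. Applying Lemma~\ref{l:kernel-monot} conditionally on the pair $(Z^{1}_{n},Z^{2}_{n})$ yields, on a suitably enlarged probability space, a pair $(Z^{1}_{n+1},Z^{2}_{n+1})$ such that, conditionally on $\F_{n}$, $Z^{k}_{n+1}$ has the distribution of one branching-selection step starting from $Z^{k}_{n}$ for $k=1,2$, and $Z^{1}_{n+1}\prec Z^{2}_{n+1}$ almost surely. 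This preserves the induction hypothesis.

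To make the enlargement of the probability space rigorous, I would appeal to the Ionescu--Tulcea theorem: the one-step coupling of Lemma~\ref{l:kernel-monot} can be chosen measurably in $(\mu_{1},\mu_{2})$ (for instance by using explicit i.i.d.\ displacements $(\varepsilon_{i,j})$ as in the proof of that lemma, which makes the construction a deterministic measurable function of $(Z^{1}_{n},Z^{2}_{n})$ and of a fresh i.i.d.\ family), so we may define a transition kernel on $\C_{N_{1}}\times \C_{N_{2}}$ whose marginals are the individual branching-selection kernels and which is supported on $\{(\nu_{1},\nu_{2}):\nu_{1}\prec\nu_{2}\}$. Ionescu--Tulcea then produces a probability measure on the product space $(\C_{N_{1}}\times\C_{N_{2}})^{\N}$ under which $(Z^{1}_{n},Z^{2}_{n})_{n\ge 0}$ has the desired marginals and satisfies $Z^{1}_{n}\prec Z^{2}_{n}$ for every $n$.

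There is no real obstacle here beyond the bookkeeping of the measurable selection and the extension theorem; the inductive step is already contained in Lemma~\ref{l:kernel-monot}, and the stochastic order $\prec$ is preserved automatically at each step by construction. The only mild care needed is to check that the explicit construction in the proof of Lemma~\ref{l:kernel-monot} (namely, coupling through a common i.i.d.\ family $(\varepsilon_{i,j})$ indexed so that particles are matched in decreasing order of position) is jointly measurable in the input configurations, so that iterating it gives a legitimate Markov chain on the product state space.
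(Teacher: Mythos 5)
Your proof is correct and matches the paper's intent: the authors state the corollary as immediate from Lemma~\ref{l:kernel-monot}, and the natural argument is precisely the inductive iteration of the one-step monotone coupling, with measurability guaranteed by the explicit shared-randomness construction in that lemma's proof (indeed, the paper later uses exactly such a global i.i.d.\ family $(\varepsilon_{\ell,i,j})$ in the proof of Proposition~\ref{p:subadd}, which builds the whole path-level coupling at once without needing the Ionescu--Tulcea machinery).
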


\begin{prop}\label{p:subadd}
There exists $v_{N}(p) \in \R$ such that, with probability one, and in $L^{1}(\P)$, for any initial population $X^{N}_{0} \in \C_{N}$,
$$\lim_{n \to +\infty} n^{-1} \min X^{N}_{n} =  \lim_{n \to +\infty} n^{-1} \max X^{N}_{n} = v_{N}(p).$$ 
\end{prop}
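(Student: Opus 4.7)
I would first prove the result for the specific initial condition $X_{0}^{N} = N\delta_{0}$, obtaining $v_{N}(p)$ as the limit of $n^{-1}\E[\max X^{N}_{n}]$ by a subadditive argument, and then use monotonicity (Corollary~\ref{c:kernel-monot}) to transfer the conclusion to arbitrary initial conditions. Throughout, the three structural ingredients are: translation invariance of the dynamics (obvious from the definition of the branching-selection step), the Markov property, and the monotonicity provided by Corollary~\ref{c:kernel-monot}.

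\textbf{Step 1: Subadditivity.} Start from $X_{0}^{N} = N\delta_{0}$, and set $M_{n} := \max X_{n}^{N}$, $a_{n} := \E[M_{n}]$, which is finite by (A1)--(A2). Since $X_{m}^{N} \prec N\delta_{M_{m}}$, Corollary~\ref{c:kernel-monot} combined with translation invariance yields a coupling in which
$$M_{m+n} \ \leq\ M_{m} + \widetilde{M}_{n},$$
where $\widetilde{M}_{n}$ is independent of $M_{m}$ and has the same law as $M_{n}$ (it is the max, at time $n$, of a fresh process started from $N\delta_{0}$). Taking expectations gives $a_{m+n} \leq a_{m}+a_{n}$, so Fekete's lemma yields $a_{n}/n \to v := \inf_{n} a_{n}/n$. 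The symmetric argument using $N\delta_{\min X_{m}^{N}} \prec X_{m}^{N}$ shows $b_{n} := \E[\min X_{n}^{N}]$ is superadditive, so $b_{n}/n \to w$.

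\textbf{Step 2: Identification.} Since $\min X_{n}^{N} \leq \max X_{n}^{N}$ we have $w \leq v$, and Corollary~\ref{c:diam-negligeable} gives $n^{-1}\E[d(X_{n}^{N})] \to 0$, forcing $v = w =: v_{N}(p)$. This already provides $L^{1}$ convergence of $n^{-1}\min X_{n}^{N}$ and $n^{-1}\max X_{n}^{N}$ to $v_{N}(p)$.

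\textbf{Step 3: Almost sure convergence.} Iterating the inequality of Step 1, for any fixed $m \geq 1$ one obtains
$$M_{km}\ \leq\ M^{(1)}_{m} + \cdots + M^{(k)}_{m}$$
with the $M^{(i)}_{m}$ i.i.d.\ copies of $M_{m}$; the classical SLLN yields $\limsup_{k} M_{km}/(km) \leq a_{m}/m$ almost surely, and letting $m \to \infty$ along a sequence realising $\inf a_{m}/m = v_{N}(p)$ gives $\limsup_{k} M_{km}/(km) \leq v_{N}(p)$ almost surely. For general $n = km+r$ with $0 \leq r < m$, the same monotone/translation coupling bounds $M_{n} - M_{km}$ by the max of an independent fresh process run for at most $m$ steps, and the exponential moment assumption (A2) easily produces a crude a.s.\ bound of order $o(n)$ (via Markov plus Borel--Cantelli applied to a second-moment estimate of that max). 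This yields $\limsup_{n} M_{n}/n \leq v_{N}(p)$ almost surely. The symmetric argument for the min gives $\liminf_{n} \min X_{n}^{N}/n \geq v_{N}(p)$, and Corollary~\ref{c:diam-negligeable} closes the sandwich.

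\textbf{Step 4: Arbitrary initial condition.} For $\mu \in \C_{N}$ with $c := \min \mu$, $C := \max \mu$, the bracketing $N\delta_{c} \prec \mu \prec N\delta_{C}$, together with translation invariance and two applications of Corollary~\ref{c:kernel-monot}, yields couplings in which
$$M_{n} + c \ \leq\ \max X^{N,\mu}_{n} \ \leq\ M'_{n} + C,$$
where $M_{n}$ and $M'_{n}$ are distributed as the max at time $n$ of the process started from $N\delta_{0}$. Dividing by $n$ and invoking Step 3 on each side (and symmetrically for the min) transfers the a.s.\ and $L^{1}$ convergence to the $\mu$-started process.

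\textbf{Main obstacle.} The only non-routine point is the interpolation within a block $[km,(k+1)m]$ in Step 3: one must check that the error term between $M_{n}$ and $M_{km}$ is $o(n)$ almost surely, uniformly in the block. The exponential moment assumptions (A1)--(A2) make this straightforward via a second-moment Borel--Cantelli bound on the max of finitely many i.i.d.\ step variables, but it is the place where the moment hypotheses are actually consumed in this elementary argument.
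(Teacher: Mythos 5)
Your proof is correct, and it takes a genuinely different route from the paper's. The paper sets up a stationary subadditive array $(W^{N}_{\ell,k})$ and then invokes Kingman's subadditive ergodic theorem as a black box to get a.s.\ and $L^{1}$ convergence of $n^{-1}\max X^{N}_{n}$ in one stroke, after which Corollary~\ref{c:diam-negligeable} transfers the limit to the min. You avoid Kingman entirely: Fekete's lemma delivers the $L^{1}$ limit, the block-iteration $M_{km}\leq M^{(1)}_{m}+\cdots+M^{(k)}_{m}$ plus the classical SLLN gives $\limsup_n M_n/n \leq a_m/m$ a.s.\ for each fixed $m$ (after the within-block interpolation, which is fine since the block overshoots are i.i.d.\ with finite mean, so Borel--Cantelli yields $F_k/k\to0$), and the $\liminf$ direction — which a one-sided subadditive block argument cannot produce on its own — is supplied by the symmetric superadditive bound on the min together with the sandwich from Corollary~\ref{c:diam-negligeable}. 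This is a nice use of the extra structure of the problem: the diameter bound does work that Kingman's theorem would otherwise have to do. The trade-off is that your argument is longer and requires more explicit bookkeeping of the coupling across blocks, whereas the paper's is shorter at the price of citing a deeper theorem. Both handle the reduction to $N\delta_{0}$ and the extension to general $\mu\in\C_N$ in essentially the same way, via translation invariance and Corollary~\ref{c:kernel-monot}. One small remark: in Step~1 you should note explicitly that $v$ is finite — this follows since $v=\inf_n a_n/n\leq a_1<\infty$ and $v\geq w=\sup_n b_n/n\geq b_1>-\infty$, both finite by (A1)--(A2) — but this is implicit in the paper's proof too.
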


\begin{proof}
Note that, in view of Corollary~\ref{c:diam-negligeable},  if either of the two limits in the above statement exists, then the other must exist too and have the same value. 
Moreover, owing to the translation invariance
of our particle system (the dynamics is invariant with respect to shifting all the particles by a translation on $\R$), and to Corollary~\ref{c:kernel-monot}, we see that it is enough to prove the result when $X^{N}_{0} = N \delta_{0}$. The idea of the proof is to invoke 
Kingman's subadditive ergodic theorem (see e.g.~\cite{Dur}), using the monotonicity property described by Lemma~\ref{l:kernel-monot}.

Consider an i.i.d. family $(\varepsilon_{\ell,i,j})_{\ell \geq 0, i \in \ig 1, N \id, \ j=1,2}$ with common distribution $p$ (the index $\ell$ will be used to shift the origin of time when applying Kingman's theorem). 
For all $\ell \geq 0$, denote by $(W^{N}_{\ell,k})_{k \geq 0}$ the branching-selection system starting at 
$W^{N}_{\ell,0}:=N \delta_{0}$ and governed by the following steps. 
For $k \geq 0$,  write $W^{N}_{\ell,k}=\sum_{i=1}^{N} \delta_{x_{i}}$, with
$x_{1} \geq \ldots \geq x_{N}$. The population $T_{\ell,k}$ derived from $W^{N}_{\ell,k}$ by branching is then defined by 
$T_{\ell,k}:=\sum_{(i,j) \in \ig 1,N \id \times \{ 1,2 \}} \delta_{x_{i}+\varepsilon_{\ell+k,i,j}}$. Then, $W^{N}_{\ell,k+1}$ is obtained 
from $T_{\ell,k}$ by keeping only the $N$ rightmost particles.

Observe that $(W^{N}_{0,n})_{n \geq 0}$ has the same distribution as $(X^{N}_{n})_{n \geq 0}$.
Moreover, the argument used in the proof of Lemma~\ref{l:kernel-monot} shows that 
\begin{equation}\label{e:sous-additivite}   \mbox{ for all $n,m\geq 0$, }\max W^{N}_{0,n+m} \leq \max W^{N}_{0,n}+\max W^{N}_{n,m}.\end{equation}
Indeed, it is enough to note that~(\ref{e:sous-additivite}) compares the maximum of two populations obtained by performing $m$ branching-selection steps coupled as in Lemma~\ref{l:kernel-monot}, 
starting respectively from $W^{N}_{0,n}$ (for the l.h.s.) and from $N \delta_{\max W^{N}_{0,n}}$ (for the r.h.s.). 

Moreover, it is easily seen from the definition that, for each $d \geq 1$, the random variables $(W^{N}_{dn, d})_{n \geq 0}$ form an i.i.d. family, and that the distribution of $(W^{N}_{\ell,k})_{k \geq 0}$ clearly does not depend on $\ell$.  
One can then check from the definition that the following inequality holds $ \left| \max W^{N}_{0,n} \right|\leq \sum_{k=0}^{n-1} \sum_{(i,j) \in \ig 1,N \id \times \{ 1,2 \}} |\varepsilon_{\ell+k,i,j}|.$
Using assumptions (A1) and (A2), it is then quite clear that there exists $\psi>-\infty$ such that,  for all $n \geq 0$,  $ \psi n \leq E(W^N_{0,n}) < +\infty$.

We conclude that the hypotheses of Kingman's subadditive ergodic theorem hold (see e.g.~\cite{Dur}), and deduce that
 $\lim_{n \to +\infty} n^{-1} \max X^{N}_{n}$ exists both a.s. and in $L^{1}(\P)$, and is constant. 
\end{proof}

\begin{prop}
The sequence $(v_{N}(p))_{N \geq 1}$ is non-decreasing.
\end{prop}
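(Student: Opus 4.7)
The plan is to reduce the claim to a direct application of the monotonicity coupling in Corollary~\ref{c:kernel-monot}, together with the a.s. / $L^{1}$ convergence in Proposition~\ref{p:subadd}. Fix integers $N_{1} \leq N_{2}$. I would start from the simplest possible pair of initial conditions, namely $\mu_{1} := N_{1} \delta_{0} \in \C_{N_{1}}$ and $\mu_{2} := N_{2} \delta_{0} \in \C_{N_{2}}$. A one-line check of the definition of the stochastic order shows $\mu_{1} \prec \mu_{2}$: for every $x \in \R$, $\mu_{1}([x,+\infty[) = N_{1} \un(x \leq 0) \leq N_{2} \un(x \leq 0) = \mu_{2}([x,+\infty[)$.

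Next I would apply Corollary~\ref{c:kernel-monot} to produce a coupling $(Z^{1}_{n}, Z^{2}_{n})_{n \geq 0}$ of the two branching-selection systems, with $Z^{1}_{0} = \mu_{1}$, $Z^{2}_{0} = \mu_{2}$, and $Z^{1}_{n} \prec Z^{2}_{n}$ for every $n \geq 0$. The stochastic-order characterization recalled in Section~\ref{s:not-def} (when the atoms of both measures are arranged in decreasing order, $\mu \prec \nu$ implies $x_{i} \leq y_{i}$ for all admissible $i$, and in particular $\max \mu \leq \max \nu$) then yields $\max Z^{1}_{n} \leq \max Z^{2}_{n}$ for every $n \geq 0$ almost surely.

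Dividing by $n$ and letting $n \to +\infty$, Proposition~\ref{p:subadd} applied separately to $(Z^{1}_{n})_{n \geq 0}$ and $(Z^{2}_{n})_{n \geq 0}$ (both are branching-selection chains with deterministic initial conditions in $\C_{N_{1}}$ and $\C_{N_{2}}$ respectively) shows that $n^{-1} \max Z^{i}_{n}$ converges a.s. to $v_{N_{i}}(p)$ for $i=1,2$. Passing to the limit in the inequality $\max Z^{1}_{n} \leq \max Z^{2}_{n}$ therefore gives $v_{N_{1}}(p) \leq v_{N_{2}}(p)$, which is exactly what we need.

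There is no real obstacle here: the proof is essentially automatic once the initial conditions have been chosen to be comparable in the stochastic order, because everything else has been packaged into Corollary~\ref{c:kernel-monot} and Proposition~\ref{p:subadd}. The only small point to be careful about is that the asymptotic speed produced by Proposition~\ref{p:subadd} does not depend on the choice of initial population in $\C_{N}$, which is why it is legitimate to work with the specific starting points $N_{i} \delta_{0}$ rather than some abstract pair $\mu_{1} \prec \mu_{2}$.
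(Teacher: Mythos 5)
Your proposal is correct and takes essentially the same route as the paper: compare the initial conditions $N_{1}\delta_{0} \prec N_{2}\delta_{0}$, invoke Corollary~\ref{c:kernel-monot} to get a coupling with $Z^{1}_{n} \prec Z^{2}_{n}$, and pass to the limit via Proposition~\ref{p:subadd}. The paper states this in one line; you have simply spelled out the same argument in full.
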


\begin{proof}
Consequence of the fact that, when $N_{1} \leq N_{2}$, $N_{1} \delta_{0} \prec N_{2} \delta_{0}$, and of the monotonic coupling property given in Corollary~\ref{c:kernel-monot}.
\end{proof}
We can deduce from the above proposition that there exists $v_{\infty}(p)$ such that
$\lim_{N \to +\infty} v_{N}(p) = v_{\infty}(p)$. A consequence of the proof of 
Theorem~\ref{t:BD} below is that 
$v_{\infty}(p)$ is in fact equal to the number $v(p):=\Lambda'(t^{*})$, which is finite from our assumptions on $p$.

\subsection{Coupling with a family of $N$ branching random walks}\label{ss:cwafonbrw}

Let  $(\BRW_{i})_{i \in \ig1,N \id}$, denote $N$ independent copies of a branching random walk $\BRW$ as defined in Section~\ref{s:not-def}.
Each $\BRW_{i}$ thus consists of a binary tree $\TT_{i}$ and a map $\Phi_{i}$.
For $1 \leq i \leq N$, and $n \geq 0$, remember that $\TT_{i}(n)$ denotes the set of vertices of $\TT_{i}$ located at depth $n$, 
and define the disjoint union $\T^{N}_{n} := \TT_{1}(n) \sqcup \cdots \sqcup \TT_{N}(n)$.
 For every $n$, fix an a priori 
(i.e. depending only on the tree structure, not on the random walk values) total order on $\T^{N}_{n}$.
We now define by induction a sequence $(G^{N}_{n})_{n \geq 0}$ such that, for each $n \geq 0$, $G^{N}_{n}$ is a random subset of  
$\T^{N}_{n}$ with exactly $N$ elements. 
 First, set $G^{N}_{0}:= \T^{N}_{0}$. 
Then, given $n \geq 0$ and $G^{N}_{n}$, let $H^{N}_{n}$ denote the subset of  $\T^{N}_{n+1}$ formed by the children (each with respect to the tree structure it belongs to) 
of the vertices in $G^{N}_{n}$. Then, define $G^{N}_{n+1}$ as the subset of $H^{N}_{n}$ formed by the $N$ vertices that are associated 
with the largest values of the underlying random walks $\Phi_{i}$s (breaking ties by using the a priori order on $\T^{N}_{n}$). 
Now let $\mathfrak{X}^{N}_{n}$ denote the (random) empirical distribution describing the values taken by the $\Phi_{i}$s
on the (random) set of vertices $G^{N}_{n}$. 
The sequence $(\mathfrak{X}^{N}_{n})_{n \geq 0}$ has the same distribution as $(X^{N}_{n})_{n \geq 0}$, when started from $X^{N}_{0}:=N \delta_{0}$. 
  Thus, we can take for our reference probability space $(\Omega, \F, \P)$ the one on which  $\BRW_{1},\ldots, \BRW_{N}$ are defined, 
 and let $X^{N}_{n}$ be equal to the empirical distribution associated with the subset $G^{N}_{n}$, and so obtain a coupling 
 between $(X^{N}_{n})_{n \geq 0}$ (with $X^{N}_{n}= N \delta_{0}$) and the $N$ branching random walks $\BRW_{1},\ldots, \BRW_{N}$.

\section{Results on the branching random walk killed below a linear space-time barrier}\label{s:BRW}

Let us start with the following definition, adapted from~\cite{Pem}. 
Given $v \in \R$ and $m \geq 1$, we say that a vertex $u \in \BRW$ is $(m,v)-$good if there exists a finite descending path $u=:u_{0},u_{1},\ldots, u_{m}$ such that 
$\Phi(u_{i}) -\Phi(u_{0}) \geq v i$ for all $i \in \ig 0,m\id$. Similarly, 
we say that $u$ is $(\infty,v)-$good if  there exists an infinite descending path $u=:u_{0},u_{1},\ldots$ such that 
$\Phi(u_{i}) -\Phi(u_{0}) \geq v i$ for all $i \in \ig 0,+\infty\ig$.

With this terminology, the main result in~\cite{GanHuShi} can be stated as follows, remembering that 
$v(p)=\Lambda'(t^{*})$ and $\chi(p)=\textstyle{\frac{\pi^{2}}{2}} t^{*} \Lambda''(t^{*})$.
\begin{theorem}\label{t:borne-bien-original}(Theorem 1.2 in~\cite{GanHuShi})
Let $\rho(\infty,\epsilon)$ denote the probability that the root of $\BRW$ is $(\infty,v(p)-\epsilon)-$good.  
Then, as $\epsilon$ goes to zero,  
$$\rho(\infty,\epsilon)=\exp\left(-\left[\frac{\chi(p)+o(1)}{\epsilon} \right]^{1/2}\right).$$
\end{theorem}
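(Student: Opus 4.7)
The natural route, followed in Gantert, Hu and Shi, combines the Cram\'er exponential tilt at $t^*$ with sharp spectral estimates for a random walk confined to a thin strip. For a descending path $u_0,\ldots,u_n$ in $\BRW$, introduce $S_i := \Phi(u_i) - (v(p)-\epsilon)i$, so that the path is $(n,v(p)-\epsilon)$-good exactly when $S_i \geq 0$ for all $i \leq n$. Since $\log 2 = t^* v(p) - \Lambda(t^*)$, Biggins' many-to-one formula yields, for any nonnegative functional $F$,
\begin{equation*}
\Q\!\left[\sum_{u \in \TT(n)} F(S_0,\ldots,S_n)\right] = \widehat{\E}\!\left[F(\hat{S}_0,\ldots,\hat{S}_n)\, e^{-t^* \hat{S}_n + t^* \epsilon n}\right],
\end{equation*}
where, under the tilted expectation $\widehat{\E}$, $\hat{S}_i = \hat{W}_i + \epsilon i$ with $\hat{W}$ a centered random walk whose increments have variance $\Lambda''(t^*)$. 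The critical drift has been absorbed into the barrier; only the slack $\epsilon$ and the exponential weight $e^{-t^* \hat{S}_n}$ remain.

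For the upper bound $\rho(\infty,\epsilon) \leq \rho(n,\epsilon)$, the plan is to apply this identity with $F = \mathbf{1}_{\{S_i \geq 0,\,\forall i \leq n\}}$ and split according to whether $\hat{S}_n$ lies below or above a level $K$; the weight $e^{-t^* \hat{S}_n}$ makes the region $\{\hat{S}_n > K\}$ negligible once $t^* K \gg 1$. The remaining contribution is controlled by the spectral asymptotics of $\hat{S}$ confined to $[0,K]$: the principal eigenvalue of the associated Fokker--Planck operator has leading term $\pi^2 \Lambda''(t^*)/(2K^2)$. With $K \asymp \epsilon^{-1/2}$ and $n \asymp \epsilon^{-1}$, optimization produces the upper bound $\exp(-[(\chi(p)+o(1))/\epsilon]^{1/2})$.

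The lower bound is more delicate: one must construct, with probability at least $\exp(-[(\chi(p)+o(1))/\epsilon]^{1/2})$, an infinite descending path keeping $S_i \geq 0$. The standard route is a second-moment argument at the block level: inside a single block of length $n \asymp \epsilon^{-1}$, count descending paths whose values $S_i$ stay in a strip of width $\asymp \epsilon^{-1/2}$ shaped by the Dirichlet sine profile. The first moment is provided by the many-to-one identity together with local CLT estimates; the second moment requires controlling pairs of paths through their most recent common ancestor. Iterating over successive blocks and chaining them via a Borel--Cantelli-type argument yields an infinite good path with the desired probability.

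The main technical obstacle is extracting the \emph{sharp} constant $\chi(p) = (\pi^2/2) t^* \Lambda''(t^*)$, rather than merely the order $\epsilon^{-1/2}$. This demands uniform Gaussian/heat-kernel estimates for the tilted walk inside a strip of width of order $\epsilon^{-1/2}$, precise asymptotics for the Dirichlet principal eigenfunction on that strip, and a matching second-moment lower bound at the relevant time scale $n \asymp \epsilon^{-1}$; all three, which form the technical core of the Gantert--Hu--Shi argument, are used here as a black box.
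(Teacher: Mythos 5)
Note first that the paper does not prove this statement: it is quoted verbatim as Theorem~1.2 of Gantert, Hu and Shi and used as a black box throughout Sections~\ref{s:lower} and~\ref{s:upper}. The closest in-house material is the informal discussion in Section~\ref{s:discussion}, which follows the Gantert--Hu--Shi strategy but, as the authors stress there, deals mostly with orders of magnitude and not with the sharp constant. Your sketch broadly reproduces that strategy (exponential tilt at $t^*$ via the many-to-one formula, a first-moment upper bound with confinement to a thin strip, a second-moment lower bound at the block level, chaining across blocks), so at that level it is consistent with what the paper outlines, and, like the paper, it ultimately defers the sharp constant to Gantert--Hu--Shi.

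There are, however, two concrete errors in your sketch. First, the scales: you take block length $n \asymp \epsilon^{-1}$ together with strip width $K \asymp \epsilon^{-1/2}$. With these choices the Dirichlet confinement cost $\pi^2\Lambda''(t^*)\,n/(2K^2) \asymp n\epsilon$ is of order $1$ and does not decay, so balancing it against the tilt reward $e^{t^*\epsilon n}$ cannot produce an exponent of order $\epsilon^{-1/2}$. The correct critical block length is $n \asymp \epsilon^{-3/2}$ (equivalently $\epsilon \asymp n^{-2/3}$, exactly the relation in the statement of Theorem~\ref{t:borne-bien}); then the slack $\epsilon n$ and the strip width $K$ are both of order $\epsilon^{-1/2}$, the confinement cost and the reward are both of order $\epsilon^{-1/2}$, and it is the competition between them that produces $\chi(p)$. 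This is precisely what Section~\ref{s:discussion} encodes through $h(u)=u/3$ and the critical exponent $u^*=3/2$ solving $h(u^*)=u^*-1$. Second, a ``Borel--Cantelli-type'' chaining argument is not the right mechanism for the lower bound: it would not yield survival of the killed process with probability bounded below. What is needed is a supercritical Galton--Watson argument at the block scale, in which vertices surviving one block of length $\asymp\epsilon^{-3/2}$ serve as roots for the next, and $\rho(\infty,\epsilon)$ is bounded below by the survival probability of the embedded Galton--Watson tree. This is exactly what Proposition~\ref{p:comparaison} together with Lemma~\ref{l:GW-extinct} makes rigorous, with $\phi^{q}$ offspring, $q\propto\epsilon^{-1/2}$, compensating the $\exp(-c\epsilon^{-1/2})$ single-block penalty.
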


We shall need a result which, although not stated explicitly in~\cite{GanHuShi}, appears there as an intermediate step in a proof.
\begin{theorem}\label{t:borne-bien}(Proof of  the upper bound part of  Theorem 1.2 in~\cite{GanHuShi})
Let $\rho(m,\epsilon)$ denote  the probability that the root of $\BRW$  is $(m,v(p)-\epsilon)-$good.  For any $0<\beta<\chi(p)$, there exists $\theta>0$ such that, for all large $m$,
 $$\rho(m,\epsilon) \leq  \exp\left(-\left[\frac{\chi(p)-\beta}{\epsilon} \right]^{1/2}\right), \mbox{ with } \epsilon:=\theta/m^{2/3}.$$
\end{theorem}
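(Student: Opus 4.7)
The plan is to follow the standard first-moment / change-of-measure route and reduce the statement to an estimate on the survival of a single random walk above a linear barrier. Writing $(S_i)_{i \geq 0}$ for a random walk with i.i.d. increments of distribution $p$, the union bound over the $2^m$ descending paths in $\TT(m)$ yields
$$\rho(m,\epsilon) \leq 2^m \P\bigl(S_i \geq (v(p)-\epsilon) i \text{ for all } i \in \ig 0, m \id \bigr).$$
I would then perform the Cramér tilt with Radon--Nikodym derivative $\prod_{i=1}^m \exp(t^{*} X_i - \Lambda(t^{*}))$; under the tilted law $\widetilde\P$ the increments are integrable with mean $\Lambda'(t^{*}) = v(p)$ and variance $\sigma^{2} := \Lambda''(t^{*})$. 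Setting $\widetilde{S}_i := S_i - v(p) i$ and using Assumption~(A3) in the form $t^{*}\Lambda'(t^{*}) - \Lambda(t^{*}) = \log 2$, the bound becomes
$$\rho(m,\epsilon) \leq \widetilde\E\Bigl[\exp(-t^{*}\widetilde{S}_m)\, \mathbf{1}\bigl\{\widetilde{S}_i \geq -\epsilon i \text{ for all } i \in \ig 0, m \id \bigr\} \Bigr].$$

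Next I would estimate this expectation via a ballot / small-deviation argument. The centered walk $\widetilde{S}$ is forced to stay above the slanted line $-\epsilon i$, while the exponential weight $e^{-t^{*}\widetilde{S}_m}$ encourages $\widetilde{S}_m$ to be as large as possible. Decomposing over the level of $\widetilde{S}_m$ (dyadic or unit strips above $-\epsilon m$) and using the Mogulskii-type small-ball principle, conditional on $\widetilde{S}_m = y$ the walk behaves like a Brownian bridge from $0$ to $y$, and the probability of remaining above $-\epsilon i$ can be estimated via the reflection principle. The relevant eigenvalue $\pi^{2}\sigma^{2}/2$ for diffusion in a slab is precisely what produces the constant $\chi(p)= \tfrac{\pi^{2}}{2} t^{*} \Lambda''(t^{*})$ after combining with the factor $t^{*}$ inherited from the tilt. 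Integrating the Gaussian density against $e^{-t^{*} y}$, the dominant contribution comes from values $y$ of order $1/t^{*}$, and one obtains an upper bound of the form $\exp\bigl(-\sqrt{\chi(p)/\epsilon}\,(1 + o(1))\bigr)$, with the $o(1)$ controlled by finite-$m$ corrections.

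Finally I would specialize to $\epsilon := \theta/m^{2/3}$. In this regime both the leading exponent $\sqrt{\chi(p)/\epsilon}$ and the typical deviation $\widetilde{S}_m$ along the optimizing path scale like $m^{1/3}$, so the barrier is genuinely binding throughout $[0,m]$ but the walk does not have time to fully ``forget'' the barrier. Taking $\theta$ sufficiently small absorbs the correction terms into $\beta$, and yields the asserted bound $\rho(m,\epsilon) \leq \exp\bigl(-\sqrt{(\chi(p)-\beta)/\epsilon}\bigr)$ for all sufficiently large $m$, uniformly in the prescribed $\beta \in (0,\chi(p))$.

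The main obstacle is obtaining the small-ball estimate with the \emph{sharp} constant $\chi(p)$ at finite horizon $m$, rather than just the correct order of magnitude. Two ingredients have to be controlled simultaneously: the eigenvalue computation $\pi^{2}\sigma^{2}/(2L^{2})$ for the appropriate slab width $L$ along the optimizing trajectory, and the tail contribution of paths with $\widetilde{S}_m$ much larger than $1/t^{*}$ (which is where the factor $t^{*}$ in $\chi(p)$ originates). In practice one either invokes a KMT-type coupling of $\widetilde{S}$ to Brownian motion together with classical barrier estimates, or argues directly by truncating the tilted walk, performing a second-moment or spine-type argument on a Lyons-type change of measure, and tracking the error terms introduced by the finite horizon. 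The fact that Theorem~\ref{t:borne-bien} only claims the constant $\chi(p)-\beta$ (not $\chi(p)$) and restricts to $\epsilon = \theta m^{-2/3}$ is precisely what allows these error terms to be absorbed without further effort.
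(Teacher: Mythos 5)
The paper does not give a from-scratch proof of this statement: it is quoted as an intermediate result from Gantert--Hu--Shi, and the only proof the paper supplies (in Section 7) deduces Theorem~\ref{t:borne-bien} from the full asymptotic statement of Theorem~\ref{t:borne-bien-original} via the Galton--Watson survival comparison of Proposition~\ref{p:comparaison}, not via a direct first-moment analysis. So your route is fundamentally different from the paper's.

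More importantly, your sketch has a genuine gap. The plain union bound $\rho(m,\epsilon)\le 2^m\P(S_i\ge (v(p)-\epsilon)i,\ 0\le i\le m)$ is useless in the regime $\epsilon=\theta/m^{2/3}$: after tilting, $\widetilde\E\bigl[e^{-t^{*}\widetilde S_m}\un\{\widetilde S_i\ge -\epsilon i,\ \forall i\}\bigr]$ is dominated by trajectories with $\widetilde S_m$ near $-\epsilon m=-\theta m^{1/3}$, and since $\epsilon m\ll\sqrt m$ this is well inside the typical fluctuation range, so the one-sided barrier only costs a polynomial factor while the weight contributes $e^{t^{*}\theta m^{1/3}}$; the expectation \emph{diverges}. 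A single-sided ``Brownian bridge above a line, estimated by reflection'' does not produce the $\pi^{2}\sigma^{2}/(2L^{2})$ eigenvalue you invoke --- that eigenvalue is a two-sided slab confinement cost. The indispensable step, which you omit, is the decomposition into paths that stay below an explicit upper barrier $v(p)i+L$ for \emph{all} $i$ (giving the genuinely small quantity $\E(\Xi_m)$, bounded by a slab confinement estimate) and paths that exceed it at some time (the quantity $\Q(\Delta_m\ge 1)$, bounded by a separate union bound with the Chernoff gain $e^{-t^{*}L}$), with $L\propto\epsilon^{-1/2}$ tuned to balance the two; this is exactly the $\Xi_m/\Delta_m$ split written out in Section~\ref{s:discussion}, following GHS. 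Finally, your claim that one should take $\theta$ ``sufficiently small'' is backwards: with $\epsilon=\theta/m^{2/3}$, smaller $\theta$ means $m$ is small relative to the stabilisation scale $\epsilon^{-3/2}$, so $\rho(m,\epsilon)$ remains far above $\rho(\infty,\epsilon)$ and the bound fails. Both the paper's derivation (condition~(\ref{e:assez-grand}) requires $\phi^{q}$ to beat $\rho(m,(1-\alpha)\epsilon)^{-1}$, which forces $\theta$ large) and the monotonicity of $f^{\pm}(\lambda)$ with $\lambda=\theta^{3/2}$ in Section~\ref{s:u-critique} show that the correct regime is $\theta$ large.
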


One should also consult the papers~\cite{DerSim1, DerSim2} for an approach of these results based on (mathematically non-rigorous) theoretical physics arguments. See also the discussion in Section~\ref{s:discussion}.

\section{The lower bound}\label{s:lower}

The arguments used here in the proof of the lower bound, combine ideas from the paper~\cite{Pem} by Pemantle, which deals with the closely related question 
of obtaining complexity bounds for algorithms that seek near optimal paths in branching random walks, and the estimate on~$\rho(m,\epsilon)$ 
from the paper~\cite{GanHuShi}, by Gantert, Hu and Shi. In fact, the proof given below is basically a rewriting of the proof of 
the lower complexity bound in~\cite{Pem} in the special case of algorithms that do not jump, with the following slight differences:  we are dealing with $N$ independent branching 
random walks being explored in parallel, rather than with a single branching random walk; we consider possibly unbounded
random walk steps; we use the estimate in~\cite{GanHuShi} instead of the cruder one derived in~\cite{Pem}.

We start with an elementary result adapted from~\cite{Pem}.
\begin{lemma}\label{l:beaucoup-de-bien}(Adapted from Lemma 5.2 in~\cite{Pem}.)
Let $v_1,v_2 \in \R$ be such that $v_{1}<v_{2}$, $n \geq 1$, $m \in \ig 1,n\id$, $K > 0$, and let $0=:x_{0},\ldots, x_{n}$ be a sequence of real numbers 
such that $x_{i+1}-x_{i} \leq K$ for all $i  \in \ig 0, n-1 \id$.
Let $I := \{  i \in \ig 0, n-m \id ; \   x_{j} - x_{i}\geq v_{1} (j-i) \mbox{ for all }j \in \ig i,i+m\id \}$.
If $ x_{n} \geq v_{2} n$, then  $ \# I \geq \textstyle{ \frac{v_{2}-v_{1}}{K-v_{1}}}\frac{n}{m} -K/(K-v_{1})$.
\end{lemma}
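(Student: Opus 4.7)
I would prove this by a greedy exploration of $\ig 0, n\id$, in the spirit of Pemantle. The idea is: walking from $0$, whenever the current position lies in $I$ we take a ``long'' jump of size exactly $m$ (a \emph{good} step), and whenever it is bad (i.e. in $\ig 0,n-m\id \setminus I$) we jump to a witness of badness, of length between $1$ and $m$ (a \emph{bad} step). Counting the long jumps bounds $\#I$ from below, while the horizontal progress, combined with $x_n \geq v_2 n$ and the one-sided step bound $K$, controls the total length of bad jumps from above.

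To make this precise, for each bad $i \in \ig 0, n-m\id$ fix a witness $\tau(i) \in (i, i+m]$ with $x_{\tau(i)} - x_i < v_1 (\tau(i)-i)$. Set $t_0 := 0$, and while $t_k \leq n-m$ let
$$t_{k+1} := t_k + m \text{ if } t_k \in I, \qquad t_{k+1} := \tau(t_k) \text{ otherwise;}$$
stop at the first index $K$ with $t_K > n-m$, noting that $t_K \leq n$ in any case. Write $G$ for the number of good steps and $S$ for the sum of the bad-step increments, so $t_K = Gm + S$. Since $(t_k)$ is strictly increasing, the $G$ good positions are distinct elements of $I$, whence $\#I \geq G$.

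To bound $x_n$, use the step bound $x_{j+1} - x_j \leq K$ to see $x_{t_{k+1}} - x_{t_k} \leq Km$ at good steps and $x_n - x_{t_K} \leq K(n - t_K)$ on the tail, and use the choice of $\tau$ to get $x_{t_{k+1}} - x_{t_k} < v_1 (t_{k+1} - t_k)$ at bad steps. Summing and collapsing $n - Gm - S = n - t_K$:
$$x_n \;<\; G \cdot Km + v_1 S + K(n - t_K) \;=\; Kn - (K - v_1) S.$$
Combined with $x_n \geq v_2 n$ this gives $S < \frac{K - v_2}{K - v_1} n$. Plugging this into the inequality $Gm > n - m - S$ coming from $t_K > n-m$ yields
$$G \;>\; \frac{n}{m} - 1 - \frac{K - v_2}{K - v_1} \cdot \frac{n}{m} \;=\; \frac{v_2 - v_1}{K - v_1} \cdot \frac{n}{m} - 1 \;\geq\; \frac{v_2 - v_1}{K - v_1} \cdot \frac{n}{m} - \frac{K}{K - v_1},$$
the last inequality using $K/(K-v_1) \geq 1$. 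Since $\#I \geq G$, this is the claimed lower bound.

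I do not expect any serious obstacle beyond careful bookkeeping. The two points that deserve attention are: (i) that distinct good steps use distinct elements of $I$, which is immediate from the strict monotonicity of $(t_k)$; and (ii) that the tail segment $(t_K, n]$, which has length $< m$, be correctly absorbed through the step bound so that the telescoping that produces $Kn - (K-v_1) S$ closes without any extra error term.
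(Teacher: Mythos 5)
Your proof is correct and follows essentially the same approach as the paper: your explicit walk $(t_k)$, taking a length-$m$ jump at each good index and jumping to a witness of badness otherwise, is exactly the paper's red/blue coloring of $\ig 0, n-1\id$ (red segments are your good jumps, each certifying one element of $I$; blue segments are your bad jumps together with the final tail), and your telescoping of $x_n = Kn - (K-v_1)S$ is the same decomposition of $x_n$ into red and blue contributions. Two minor remarks: you reuse the symbol $K$ for both the step bound and the stopping index; and your final weakening $-1 \geq -K/(K-v_1)$ tacitly assumes $v_1 \geq 0$, a hidden requirement also present in the paper's bound ``blue contribution $\leq v_1 V_{blue} + Km$'' — in both arguments the constant one actually obtains is $-1$, which is what the applications in Sections~\ref{s:lower} and~\ref{s:upper} use.
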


Since Lemma~\ref{l:beaucoup-de-bien} admits so short a proof, we give it below for the sake of completeness, even though it is quite similar to that in~\cite{Pem}. 

\begin{proof}[Proof of Lemma~\ref{l:beaucoup-de-bien}](Adapted from~\cite{Pem}.)
Consider a sequence $0=:x_{0},\ldots, x_{n}$ as in the statement of the lemma.
 Let then $\tau_{0}:=0$, and, given $\tau_{i} \leq n$, define inductively $\tau_{i+1}:=\inf \{ j \in \ig \tau_{i}+1, n  \id ; \  x_{j} < x_{\tau_{i}}+v_{1} (j-\tau_{i})
 \mbox{ or } j=\tau_{i}+m  \}$, with the convention that $\inf \emptyset = n+1$. Now "color" the integers $k \in \ig 0 , n-1 \id$, 
 according to the following rules: if $x_{\tau_{i+1}} \geq x_{\tau_{i}}+v_{1} (\tau_{i+1}-\tau_{i})$ and $\tau_{i+1} \leq n$, then 
 $\tau_{i},\ldots,\tau_{i+1}-1$ are colored 
 red. Note that this yields a segment of $m$ consecutive red terms, and that $\tau_{i}$ then belongs to $I$. 
 Then color in blue the remaining integers in $\ig 0, n-1 \id$.

  Let $V_{red}$ (resp. $V_{blue}$) denote the number of red (resp. blue) terms
 in $\ig 0 , n-1 \id$. 
 Then decompose the value of $x_{n}$ into the contributions of the steps $x_{k+1}-x_{k}$ such that $k$ is red, and such that $k$ is blue, respectively. 
 On the one hand, the contribution of red terms is $\leq K V_{red}$. On the other hand, the contribution of blue terms is $\leq V_{blue} \times v_{1}+Km$, where the $m$ is added to take 
 into account a possible last segment colored in blue only
 because it has reached the index $n$. Writing that $n=V_{red}+V_{blue}$, we deduce that 
 $v_{2} n \leq K V_{red} + v_{1}(n-V_{red}) + Km$, so that $V_{red} \geq \textstyle{ \frac{v_{2}-v_{1}}{K-v_{1}}}n -Km/(K-v_{1})$.
  Then use the fact that at least $V_{red}/m$ terms belong to $I$.
 \end{proof} 
 
In~\cite{Pem}, the result corresponding to our Lemma~\ref{l:beaucoup-de-bien}, and an estimate of the type given by Theorem~\ref{t:borne-bien},
are used in combination with an elaborate second moment argument.
In the present context, the following first moment argument turns out to be sufficient. 

\begin{proof}[Proof of the lower bound part of Theorem~\ref{t:BD}]
Assume that $X^{N}_{0}=N \delta_{0}$.
Let $\beta>0$ and $\theta>0$ be as in Theorem~\ref{t:borne-bien}. Then let $\lambda>0$, and define
$$m:= \left\lceil \theta^{3/2} \left(    \frac{(1+\lambda) (\log N)}{ (\chi(p) - \beta)^{1/2} }  \right)^{3}  \right\rceil,$$
and $\epsilon:=\theta/m^{2/3}$, so that, by  Theorem~\ref{t:borne-bien},
 \begin{equation}\label{e:appli-th-3}\rho(m,\epsilon) \leq N^{-(1+\lambda)} \mbox{ for all large } N.\end{equation}
Then let $0<\gamma<1$ and define $v_{2}:=v(p)-(1-\gamma) \epsilon$ and $v_{1}:=v(p) - \epsilon$.

 Let also $n:=\floor{N^{\xi}}$ for some $0<\xi<\lambda$.
Now consider $\kappa>0$, and let $K:=\kappa \log (2Nn)$. Consider the maximum of the random walk steps 
performed during the branching steps of $(X^{N}_{k})_{k \geq 0}$ between time $0$ and time $n$. There are $2Nn$ 
such steps, so that, by assumption (A2), there exists a value of $\kappa$ such that the probability that this maximum is larger than or equal to $K$ is less than 
$(2N n)^{-2008}$ for all large enough $N$.   
Now denote by $B_{n}$ the number of vertices in $G^{N}_{0} \cup \cdots \cup G^{N}_{n}$ (see Section~\ref{ss:cwafonbrw}) 
that are $(m,v(p)-\epsilon)-$good (each with respect to the $\BRW_{i}$ it belongs to).
 Observe that, with our definitions, for $N$ large enough, $ \textstyle{ \frac{v_{2}-v_{1}}{K-v_{1}}}\frac{n}{m} -K/(K-v_{1}) > 0$.
As a consequence, using Lemma~\ref{l:beaucoup-de-bien}, we see that, for $N$ large enough, 
the event $\max X^{N}_{n} \geq v_{2} n$ implies that either there exists a random walk step between 
time $0$ and $n$ which is $\geq K$, or $B_{n} \geq 1$. Using the union bound and the above estimate, we deduce that
 \begin{equation}\label{e:borne-reunion}\P \left( \max X^{N}_{n} \geq v_{2} n  \right) \leq (2N n)^{-2008} + \P(B_{n} \geq 1).\end{equation}
  On the other hand, $B_{n}$ can be written as 
\begin{equation}\label{e:decomp}B_{n}:=\sum_{u \in \TT_{1} \cup \cdots \cup \TT_{N}} \un(\mbox{  $u$ is $(m,v(p)-\epsilon)-$good}) \un( u \in G^{N}_{0} \cup \cdots \cup G^{N}_{n}).\end{equation}
Now observe that, by definition, for a vertex $u$ at depth $\ell$, the 
event $u\in G^{N}_{0} \cup \cdots \cup G^{N}_{n}$ is measurable with respect to the random walk increments performed 
at depth at most $\ell$, that is, the family of random variables
$\Phi_{i}(w)-\Phi_{i}(v)$, where $i \in \ig 1,N \id$, $v,w \in \TT_{i}$, $w$ is a child of $v$ (with respect to the tree structure of $\TT_{i}$), 
and $w,v$ are both located at a depth $\leq \ell$ in $\TT_{i}$.  On the other hand, the event that 
 $u$ is $(m,v(p)-\epsilon)-$good is measurable
 with respect to the random walk increments performed 
at depth at least $\ell$, that is, the family of random variables
$\Phi_{i}(w)-\Phi_{i}(v)$, where $i \in \ig 1,N \id$, $v,w \in \TT_{i}$, $w$ is a child of $v$, 
and $w,v$ are both located at a depth $\geq \ell$ in $\TT_{i}$.
 
As a consequence, the two events  $\{ u\in G^{N}_{0} \cup \cdots \cup G^{N}_{n} \}$ and  $\{ u\mbox{ is $(m,v(p)-\epsilon)-$good} \}$ are independent.
Since the total number of vertices in $G^{N}_{0} \cup \cdots \cup G^{N}_{n}$ is equal to $N (n+1)$, we 
deduce from~(\ref{e:appli-th-3}) and~(\ref{e:decomp}) that
$\E \left(     B_{n}    \right)  \leq  N(n+1) N^{-(1+\lambda)}$. Using Markov's inequality, we finally deduce from~(\ref{e:borne-reunion}) that
 \begin{equation}\label{e:encore-une-equation}\P( \max X^{N}_{n} \geq v_{2} n  ) \leq (2N n)^{-2008} + (n+1) N^{-\lambda}.\end{equation}

Now start with the obvious inequality, valid for all $t$, $\exp(t \max X^{N}_{n}) \leq \sum_{i=1}^{N}\sum_{u \in \TT_{i}(n)} \exp( t \Phi_{i}(u))$.
Taking expectations, we deduce that
$\E ( \exp(t \max X^{N}_{n}) ) \leq N 2^{n} \exp(n \Lambda(t))$. Using the definition of $t^{*}$ and $v(p)$, we then obtain that
\begin{equation}\label{e:ineg-exp}\E( \exp (t^{*} (\max X^{N}_{n}   - v(p) n)    )  \leq  N.\end{equation} 
Using~(\ref{e:ineg-exp}), we deduce that\footnote{Here are the details. Let $M:=\max X^{N}_{n} - v(p)n$.  From the fact that, for all large enough $x$, 
$x \leq \exp(t^{*}x/2008)$, we deduce that $\E(M \un( M \geq bn ) )   \leq \E \exp(t^{*}M - \frac{2007}{2008}t^{*}bn) $ for all large enough $n$. Similarly, 
    $\E(v(p)n \un( M \geq bn ) )   \leq |v(p)|n\E \exp(t^{*}M - t^{*}bn) $. The result follows from summing the two inequalities above and applying~(\ref{e:ineg-exp}).}, 
    for all $b>0$, and all large enough $n$, 
\begin{equation}\label{e:encore-une-equation-2}\E \left[  \max X^{N}_{n}  \un(   \max X^{N}_{n}    \geq (v(p)+b)n     )      \right]  \leq N 
\exp\left(  - \textstyle{\frac{2007}{2008}} t^{*} b n  \right)(1+|v(p)|n).\end{equation}
  Now observe that, by definition, $\E(n^{-1}\max X^{N}_{n}) $ is bounded above by
  $$v_{2} + (v(p)+b)\P( \max  X^{N}_{n} \geq v_{2} n) + n^{-1}\E\left[   \max X^{N}_{n} \un( \max X^{N}_{n} \geq (v(p)+b) n ) \right].$$
 Choosing a $b>0$ , we deduce from~(\ref{e:encore-une-equation}), (\ref{e:encore-une-equation-2}), and the definition of $v_{2}$, that, for all large enough $N$, 
 $$\E(n^{-1}\max X^{N}_{n}) \leq (v(p)-(1-\gamma)\epsilon) + o((\log N)^{-2}).$$ Using subadditivity 
 (see the proof of Proposition~\ref{p:subadd}), we have that $v_{N}(p) \leq \E(n^{-1}\max X^{N}_{n})$, and we easily deduce that
  $$v_{N}(p) \leq (v(p) -(1-\gamma)\epsilon) + o((\log N)^{-2}).$$
 Now remember that, as $N$ goes to infinity, $\epsilon \sim \frac{\chi(p) - \beta}{(1+\lambda)^{2}}  (\log N)^{-2}$.
 Since the above estimates are true for arbitrarily small $\beta$, $\lambda$ and $\gamma$, the conclusion follows.
 \end{proof}

\section{The upper bound}\label{s:upper}

The proof of the upper bound on $v_{\infty}(p)-v_{N}(p)$ given in~\cite{Ber} was in 
some sense a rigorous version of the heuristic argument of Brunet and Derrida 
according to which we should compare the behavior of the particle system with $N$ particles, 
to a version of the infinite population limit dynamics suitably modified by a cutoff. 
The proof given here relies upon a direct comparison with branching random walks, using the fact that, above the threshold induced by the selection 
steps, the behavior of our branching-selection particle system is exactly that of a branching random walk.

Consider $0<\lambda<1$ and let $\epsilon := \frac{\chi(p)}{((1-\lambda )\log N)^{2}}$. With this choice of $\epsilon$, 
as $N$ goes to infinity, Theorem~\ref{t:borne-bien-original} yields that 
\begin{equation}\label{e:grand-rho}\rho(\infty,\epsilon) = N^{-(1-\lambda)+o(1)}.\end{equation}

Let us now quote the following result, which is a consequence of Theorem 2 Section 6 Chapter 1 in \cite{AthNey}.
\begin{lemma}\label{l:athreya-ney}
Let $(M_n)_{n \geq 0}$ denote the population size of a supercritical Galton-Watson process with square-integrable offspring distribution started with $M_0=1$. 
 Then there exist $r>0$ and $\phi>1$ such that, for all $n \geq 0$, 
$$ P(M_n \geq \phi^n) \geq r.$$
\end{lemma}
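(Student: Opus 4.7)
The plan is to use the $L^{2}$-convergence of the normalized martingale $W_{n}:=M_{n}/\mu^{n}$, where $\mu:=\E[M_{1}]>1$, to locate a positive-probability event on which $M_{n}$ grows like $\mu^{n}$, and then to pick $\phi$ strictly below $\mu$ so that the event $\{M_{n}\geq \phi^{n}\}$ eventually contains this good event.

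First I would observe that the square-integrability hypothesis makes $(W_{n})$ an $L^{2}$-bounded nonnegative martingale; this is exactly the content of the cited Theorem~2 of Section~6 of Chapter~1 in~\cite{AthNey}. Hence $W_{n}$ converges both $\P$-almost surely and in $L^{2}$ to a limit $W$ satisfying $\E[W]=1$, which forces $\P(W>0)>0$. Next I would pick any $\phi\in(1,\mu)$, so that $\mu/\phi>1$, and use the identity
$$\frac{M_{n}}{\phi^{n}} \;=\; W_{n}\left(\frac{\mu}{\phi}\right)^{n}.$$
On the event $\{W>0\}$ the right-hand side tends $\P$-a.s. to $+\infty$, so the indicator $\un\{M_{n}\geq \phi^{n}\}$ converges to $1$ on this event, and Fatou's lemma (or bounded convergence) yields
$$\liminf_{n\to\infty}\P(M_{n}\geq \phi^{n})\;\geq\;\P(W>0)\;>\;0.$$

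It then remains to promote this asymptotic lower bound into one uniform in $n\geq 0$. I would fix $n_{0}$ such that $\P(M_{n}\geq \phi^{n})\geq \tfrac{1}{2}\P(W>0)$ for every $n\geq n_{0}$, and for each of the finitely many indices $n<n_{0}$ note that $\E[M_{n}]=\mu^{n}>\phi^{n}$ rules out $M_{n}<\phi^{n}$ almost surely, so $\P(M_{n}\geq \phi^{n})>0$. The lemma then follows by taking $r$ to be the minimum of these finitely many positive probabilities together with $\tfrac{1}{2}\P(W>0)$. No genuine obstacle appears in this strategy: the only point requiring care is the appeal to the $L^{2}$ Kesten--Stigum statement from~\cite{AthNey}, which is precisely why the square-integrability hypothesis is assumed.
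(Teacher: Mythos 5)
Your proof is correct and follows precisely the route the paper intends: the paper gives no argument beyond citing Theorem~2 of Section~6, Chapter~1 of Athreya--Ney, which is exactly the $L^2$ Kesten--Stigum statement you invoke, and your Fatou argument plus the expectation bound for small $n$ are the natural way to pass from that theorem to the lemma. The only microscopic wrinkle is that at $n=0$ you have $\mu^0=\phi^0$ rather than a strict inequality, but there $P(M_0\geq\phi^0)=P(1\geq 1)=1$ trivially.
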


Let $R$ be such that $R < v(p)$ and $p([R, +\infty)) \geq 2/3$. Consider a Galton-Watson tree whose offspring distribution is a binomial with parameters $2$ and $p([R, +\infty))$. The average number of offspring is thus equal to $2 p([R, +\infty)) \geq 4/3 >1$ with our assumptions. 
In the sequel, we use the notations $r$ and $\phi$ to denote the numbers given by Lemma~\ref{l:athreya-ney} when we use this offspring distribution.

Now, let $s_N := \ceil{\frac{\log N}{\log \phi}}+1$, consider $0<\eta<1$, and define $m:= \ceil{\textstyle{\frac{(v(p)-R) s_{N}}{  \eta \epsilon }}}$ and $n:=m+s_{N}$. Let $u$ denote a vertex at depth $m$ in a branching random walk $\BRW$, and assume that $ \Phi(u) \geq (v(p)-\epsilon) m$. 
Consider the probability that, conditional upon the values of $\Phi$ on the vertices located at depth at most $m$, there are at least $\phi^{s_N}$ distinct descending paths $u=:u_{m},\ldots, u_{n}$ starting at $u$ and satisfying $u_{i+1}-u_i \geq R$ for all 
$i \in \ig m, n-1 \id$. Lemma~\ref{l:athreya-ney} above shows that this probability is $\geq r$. Moreover, with our definition  of $m$ and $n$, and our assumption on the value of $\Phi(u)$, any such descending path has the property that  $\Phi(u_{i}) \geq  (v(p)-\epsilon(1+\eta)) i$
for all $i \in \ig m, n \id$. We conclude that the probability that there exist at least $\phi^{s_N}$ distinct descending paths of the form
 $root=u_{0},\ldots, u_{n}$ such that $\Phi(u_{i}) \geq  (v(p)-\epsilon(1+\eta)) i$
for all $i \in \ig 0, n \id$, is $\geq  \rho(m,\epsilon)  r$.

Now define $A$ as the event that, for all $j \in \ig 1, N \id$,  $\mbox{BRW}_{j}$ does not contain more than $\phi^{s_N}$ distinct descending paths of the form
 $root=u_{0},\ldots, u_{n}$ such that $\Phi_j(u_{i}) \geq  (v(p)-\epsilon(1+\eta)) i$
for all $i \in \ig 0, n \id$.  Using the fact that  $\mbox{BRW}_{1},\ldots, \mbox{BRW}_{N}$ are independent and the above discussion, we see that 
$$\P(A) \leq \left[1-  \rho(m,\epsilon) r  \right]^N.$$
Using~(\ref{e:grand-rho}), the obvious inequality $\rho(m, \epsilon) \geq \rho(\infty,\epsilon)$, and the fact that $1-x \leq \exp(-x)$ for all $x$, 
we deduce that
\begin{equation}\label{e:grand-rho-2}\P(A) \leq \exp(-N^{\lambda+o(1)}).\end{equation}

Let $\delta:= \epsilon (1+\eta)$. Define the event
 $B:=\{  \min(X^{N}_{k}) <     (v(p)-\delta) k     \mbox{ for all } k \in \ig 1 , n \id \}$, and  assume that $B \cap A^{c}$ occurs.
 From the definition of the selection mechanism, we conclude that there must be at least $\phi^{s_{N}}$ distinct vertices in the set $G^{N}_{n}$, 
 which is a contradiction since $\phi^{s_{N}}>N$.
  As a consequence, $B \cap A^{c} = \emptyset$, so that $B \subset A$.

From~(\ref{e:grand-rho-2}), we thus obtain that 
   \begin{equation}\label{e:borne-prob-B}\P(B)  \leq \exp(-N^{\lambda+o(1)}).\end{equation}
 
 To exploit this bound, we use the following result.
 \begin{prop}\label{p:borne-inf-B}
With the previous notations, for all $N$ large enough, 
$$v_{N}(p) \geq (v(p)-\delta) -  |v(p)-\delta| n\P(B)-n\E(|  \Theta_n | \un(B)  ),$$
where $\Theta_n$ is the minimum of $2nN$ i.i.d. random variables with distribution $p$.
\end{prop}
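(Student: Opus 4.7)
The plan is to establish the bound by a renewal-type decomposition of the chain into i.i.d.\ blocks, combining the monotonic coupling of Corollary~\ref{c:kernel-monot} with the almost sure convergence in Proposition~\ref{p:subadd}. First, I set up a single block: with respect to the natural filtration of $(X^N_k)$, define the stopping time
\[
\sigma := \inf\{k \in \ig 1, n \id : \min X^N_k \geq (v(p)-\delta) k\},
\]
with the convention $\sigma := n$ on the event $B$ (on which the defining set is empty), and put $\Delta := \min X^N_\sigma$, so that $1 \leq \sigma \leq n$. On $B^c$ one has $\Delta \geq (v(p)-\delta)\sigma$ by construction. On $B$, a one-step induction---in each branching-selection step the new minimum is bounded below by the previous minimum plus the smallest of the $2N$ random walk increments performed at that step, since selection of the $N$ rightmost among the $2N$ children can only increase the minimum---yields $\min X^N_n \geq n\Theta_n$, and hence $\Delta \geq -n|\Theta_n|$.

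Second, I iterate and pass to the limit. At time $\sigma$ the strong Markov property, combined with translation invariance and Corollary~\ref{c:kernel-monot}, produces a coupling in which $X^N_{\sigma+k} \succ \Delta + Y^{(1)}_k$ for every $k \geq 0$, where $Y^{(1)}$ is a branching-selection chain with $N$ particles started from $N\delta_0$ and independent of $\F_\sigma$. Applying the same stopping-time construction to $Y^{(1)}$ yields a pair $(\sigma_2, \Delta_2)$ distributed as $(\sigma,\Delta)$ and independent of it; iterating produces an i.i.d.\ sequence $(\sigma_i,\Delta_i)_{i \geq 1}$. Setting $T_k := \sigma_1 + \cdots + \sigma_k$, the iterated coupling gives the telescoping bound $\min X^N_{T_k} \geq \Delta_1 + \cdots + \Delta_k$. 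Integrability of $\sigma$ and $\Delta$ is immediate from $\sigma \leq n$ and Assumptions (A1)--(A2), so the strong law of large numbers gives $T_k/k \to \E\sigma$ and $k^{-1}\sum_{j=1}^k \Delta_j \to \E\Delta$ almost surely; since $\sigma \geq 1$ forces $T_k \to \infty$, Proposition~\ref{p:subadd} gives $\min X^N_{T_k}/T_k \to v_N(p)$ almost surely, and therefore $v_N(p) \geq \E\Delta / \E\sigma$.

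Third, I lower bound the numerator. Splitting on $B$ and $B^c$ and using the two cases from the block construction,
\[
\E\Delta \geq (v(p)-\delta)\E(\sigma \un(B^c)) + \E(n\Theta_n \un(B)) \geq (v(p)-\delta)\E\sigma - |v(p)-\delta| n\P(B) - n\E(|\Theta_n|\un(B)),
\]
where the second inequality uses the decomposition $\E(\sigma\un(B^c)) = \E\sigma - \E(\sigma\un(B))$, the bound $\E(\sigma\un(B)) \leq n\P(B)$ together with absolute values to handle both possible signs of $v(p)-\delta$, and the trivial estimate $\E(n\Theta_n\un(B)) \geq -n\E(|\Theta_n|\un(B))$. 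Since the two correction terms are nonnegative and $\E\sigma \geq 1$, dividing through by $\E\sigma$ absorbs the denominator and produces exactly the asserted inequality.

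The main technical subtlety is the careful iteration of the monotonic coupling: Corollary~\ref{c:kernel-monot} is a pairwise coupling statement, so one has to string together successive couplings at the random times $T_i$ and check, via the strong Markov property applied at each $T_i$, that the blocks $(\sigma_i,\Delta_i)$ are genuinely i.i.d.\ and that the telescoping lower bound $\min X^N_{T_k} \geq \sum_{j\leq k}\Delta_j$ is preserved. Once this coupling infrastructure is in place, the rest of the argument reduces to the law of large numbers and an elementary splitting on $B$ and $B^c$.
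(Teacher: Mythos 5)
Your proposal is correct and follows essentially the same route as the paper. The stopping time $\sigma$ and the block quantity $\Delta = \min X^N_\sigma$ are exactly the paper's $L$ and $\min X^N_L$; your $(T_k, \sum_{j\le k}\Delta_j)$ are the paper's $(\Gamma_k, J_k)$; and the inequality $\min X^N_{T_k}\ge \sum_{j\le k}\Delta_j$, the passage to the limit via the LLN and Proposition~\ref{p:subadd} to get $v_N(p)\ge \E\Delta/\E\sigma$, and the final splitting on $B$ and $B^c$ using $1\le\sigma\le n$ are all identical to the paper's computation. The only presentational difference is that the paper makes the i.i.d.\ block structure concrete by reusing the $W^N_{\ell,k}$ construction from the proof of Proposition~\ref{p:subadd}, whereas you invoke the strong Markov property, translation invariance, and the monotone coupling of Corollary~\ref{c:kernel-monot} more abstractly; both implementations are valid and lead to the same estimate.
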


\begin{proof}
We re-use the coupling construction given in the proof of Proposition~\ref{p:subadd}, and assume that $(X_n^N)_{n \geq 0}$ is defined using this construction by the identity $X^N_n := W^N_{0,n}$.
Start with $\Gamma_{0}:=0$ and $J_{0}:=0$, and $i:=0$.
Given $i \geq 0$, $\Gamma_{i}$ and $J_{i}$, let $L_{i+1}:=\inf \{   k \in \ig 1 , n  \id; \  \min(W^{N}_{\Gamma_{i},k}) \geq     (v(p)-\delta) k   \}$, 
with the convention that $\inf \emptyset := n$.
Then let $\Gamma_{i+1}:=\Gamma_{i}+L_{i+1}$, and let $J_{i+1}:=J_{i} + \min(W^{N}_{\Gamma_{i},L_{i+1}})$.

Using an argument similar to the proof of Lemma~\ref{l:kernel-monot}, it is then quite easy to deduce that, a.s., 
\begin{equation}\label{e:minoration} \mbox{ for all $i \geq 0$,  } \min W^{N}_{0,\Gamma_{i}} \geq J_{i}.\end{equation}

Observe that the sequence $(\Gamma_{i+1}-\Gamma_{i})_{i \geq 0}$ is i.i.d., and that the common distribution of 
the $\Gamma_{i+1}-\Gamma_{i}$ is that of the random variable $L$ defined by 
$L:= \inf \{   k \in \ig 1 , n  \id; \  \min(X^{N}_{k}) \geq     (v(p)-\delta) k   \}$, with the convention that $\inf \emptyset := n$.
Similarly, the sequence $(J_{i+1}-J_{i})_{i \geq 0}$ is i.i.d., the common distribution of the $J_{i+1}-J_{i}$ being that of 
$\min X^{N}_{L}$.

From the law of large numbers and  Proposition~\ref{p:subadd}, we have that, a.s.,  
$\lim_{i \to +\infty} i^{-1} \min X^{N}_{\Gamma_{i}} = v_{N}(p) \E(L)$, while the law of large numbers and~(\ref{e:minoration}) imply that
  $\liminf_{i \to +\infty} i^{-1} \min X^{N}_{\Gamma_{i}} \geq  \E(\min X^{N}_{L})$.
We conclude that $v_{N}(p) \geq \textstyle{\frac{\E(\min X^{N}_{L})}{\E(L)}}$.
Now, let $\Theta_n$ denote the minimum of all the random walk steps performed by the branching-selection system between time $0$ and $n$.

By definition we have that $\min X^{N}_{L} \geq (v(p)- \delta) L \un(B^{c}) + L \Theta_n   \un(B)$, so that 
$\E(\min X^{N}_{L}) \geq (v(p)-\delta) (\E(L) - \E(L \un(B)))  +  \E(   L \Theta_n  \un(B)    )$.
Using the fact that $1 \leq L \leq n$, we obtain that $\textstyle{\frac{\E(\min X^{N}_{L})}{\E(L)}} \geq  (v(p)-\delta) -  |v(p)-\delta| n\P(B)-n\E( | \Theta_n | \un(B)  )$.
\end{proof}

\begin{proof}[Proof of the upper bound part in Theorem~ \ref{t:BD}]
In view of Proposition~\ref{p:borne-inf-B}, we deduce that 
$v_{N}(p) \geq (v(p)-(1+\eta)\epsilon) (1-n\P(B)) - n\E(  |\Theta_n | \un(B)  ).$
Bounding above $ | \Theta_n |$ by the sum of the absolute values of the $2nN$ corresponding i.i.d. variables, and using Schwarz's inequality thanks to Assumptions (A1) and (A2), 
we deduce that $\E(  |\Theta_n |\un(B)  ) \leq 2 n N C \P(B)^{1/2}$ for some constant $C$ (depending only on $p$).
From~(\ref{e:borne-prob-B}) and the definition of $n$, we deduce that, as $N$ goes to infinity,  
$n\P(B)$ and $n \E( | \Theta_n | \un(B)  )$  are $o((\log N)^{-2})$, so we obtain that 
$$v_{N}(p) \geq v(p)   -   \frac{\chi(p)(1+\eta)}{(1-\lambda)^{2}} (\log N)^{-2} + o( (\log N)^{-2}).$$
Since $\lambda$ and $\eta$ can be taken arbitrarily small in the argument leading to the above identity, the conclusion follows.

\end{proof}

\section{The Bernoulli case when $1/2 \leq \alpha < 1$}\label{s:autres-cas}

In the Bernoulli case $p=\alpha \delta_{1} + (1-\alpha) \delta_{0}$, 
with $1/2 \leq \alpha<1$, Assumption (A3) breaks down, and 
the behavior of the particle system  turns out to be quite different from Brunet-Derrida, as stated in the following theorems. 
 Note that, when  $1/2 \leq \alpha < 1$, $v_{\infty}(p)=1$.

\begin{theorem}\label{t:un-demi}
For $\alpha=1/2$, there exists $0<c_{*}(p) \leq c^{*}(p)<+\infty$ such that, for all large $N$, 
\begin{equation}\label{e:un-demi}  c_{*}(p) N^{-1} \leq 1 - v_{N}(p) \leq c^{*}(p) N^{-1}.\end{equation}
\end{theorem}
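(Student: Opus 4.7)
Both bounds are driven by the identity
\begin{equation*}
1 - v_N(p) \;=\; \E_\pi\!\bigl[ 2^{-2 M_0} \bigr],
\end{equation*}
valid in the stationary regime $\pi$ of the shifted branching-selection chain, where $M_0$ denotes the number of particles at the rightmost occupied site. Indeed, conditionally on the configuration, the maximum advances iff at least one of the $2M_0$ Bernoulli$(1/2)$ top-level children takes a $+1$ step, an event of probability $1-2^{-2M_0}$, and the long-run frequency of advances equals $v_N(p)$ by Proposition~\ref{p:subadd}. The theorem thus reduces to showing $\E_\pi[2^{-2 M_0}] \asymp 1/N$.

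The heart of the argument is a direct analysis of the dynamics of $M_0^{(n)}$. Writing $K_n \sim \mathrm{Bin}(2M_0^{(n)},1/2)$ for the number of top-level children landing at the new top site, one has $M_0^{(n+1)} = \min(K_n,N)$ plus a nonnegative correction coming from particles of lower levels that migrate upward. In the regime $M_0^{(n)} \le N/2$ the selection step does not bite and, excluding the rare event $\{K_n=0\}$ (of probability $2^{-2M_0^{(n)}}$, macroscopic only when $M_0^{(n)}$ is of order $1$), $M_0$ is a martingale with increments of variance $M_0^{(n)}/2$. It thus evolves approximately as a random walk on $\{1,\ldots,N\}$ with reflection at $N$ and an occasional upward ``reinjection'' (on $\{K_n = 0\}$ the maximum stays and the new top count is fed by particles from lower levels). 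A careful analysis of this process should yield two-sided bounds $\pi(M_0 = k) \asymp 1/N$ for $k$ in a bounded range near $1$, whence
\begin{equation*}
\E_\pi[2^{-2 M_0}] \;=\; \sum_{k=1}^N \pi(M_0=k)\,4^{-k} \;\asymp\; \frac{1}{N},
\end{equation*}
which gives both bounds of Theorem~\ref{t:un-demi} simultaneously.

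The main obstacle is that $M_0$ is not Markov on its own: the lower-level correction is random and correlated with $M_0$ itself, and the bounce-back mechanism depends on how much mass is stored below the top level. The plan to circumvent this is to use the monotonicity properties of Lemma~\ref{l:kernel-monot} and Corollary~\ref{c:kernel-monot} to sandwich $M_0$ between two auxiliary one-dimensional chains whose stationary distributions are tractable; the diameter control of Proposition~\ref{p:diam-borne} additionally ensures that the lower-level correction is bounded by $O(\log N)$, negligible at the $\Theta(1/N)$ scale of~$\pi(M_0 = k)$ near the lower end of the state space.
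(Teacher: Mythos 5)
Your outline takes a genuinely different route from the paper's, and the starting identity $1 - v_N(p) = \E_\pi[2^{-2M_0}]$ is a nice observation: in the Bernoulli$(1/2)$ case, conditionally on the current configuration, $\max X^N_{n+1} - \max X^N_n$ is Bernoulli with parameter $1 - 2^{-2M_0}$, so the identity follows from an ergodic theorem for the shifted chain. But the heart of the argument --- the two-sided bound $\E_\pi[2^{-2M_0}] \asymp 1/N$, equivalently $\pi(M_0 = k) \asymp 1/N$ for bounded $k$ --- is only asserted (``should yield''), and the sandwiching by auxiliary one-dimensional chains is not carried out. That is precisely where all the difficulty lies: one has to control the stationary measure, near the lower boundary, of a nearly critical binomial chain, capped at $N$, with a complicated reinjection mechanism (when all $2M_0$ top children stay put, the new top count is $\min(2M_0 + \mathrm{Bin}(2M_1,1/2),\,N)$, with $M_1$ the second-level count and hence not a function of $M_0$ alone), and extract sharp $\Theta(1/N)$ asymptotics. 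One would also need existence, uniqueness and ergodicity of $\pi$, which is plausible here given the deterministic diameter bound in the Bernoulli case but still needs to be established.

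The paper sidesteps invariant measures and proves the two inequalities by separate mechanisms that you may want to compare against your plan. For the lower bound on $1-v_N$: the top-level count $X^N_m(m)$ is stochastically dominated by the total size of $N$ independent critical $\mathrm{Bin}(2,1/2)$ Galton--Watson trees, so $\P(X^N_m(m) \geq 1) \leq cN/m$; taking $m = AN$ with $A>c$ and using subadditivity ($v_N(p) \leq \E[m^{-1}\max X^N_m]$) gives $1 - v_N(p) \geq (1-c/A)/(AN)$. For the upper bound: introducing the stopping time $U$ at which the top level first drops below $2N/3$, one shows $\min X^N_U \geq U-1$, hence $v_N(p) \geq 1 - 1/\E U$ by a renewal argument as in Proposition~\ref{p:borne-inf-B}, and then $\E U \gtrsim N$ via a coupling with a binomial martingale and Doob's maximal inequality. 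So the same dynamical picture (critical Galton--Watson behaviour at the top level plus fast reinjection from below) underlies both your plan and the paper's proof, but the paper extracts the bounds through hitting-time and first-moment estimates rather than stationary measures; as written, your proposal leaves the decisive estimate unproved.
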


\begin{theorem}\label{t:sup-un-demi}
For $\alpha>1/2$, there exists $0<d^{*}(p) \leq d_{*}(p)<+\infty$ such that, for all large $N$, 
\begin{equation}\label{e:sup-un-demi}  \exp(-d_{*}(p) N) \leq   1 - v_{N}(p) \leq \exp(-d^{*}(p) N).\end{equation}
\end{theorem}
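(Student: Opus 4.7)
\medskip

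\noindent\textbf{Proof plan for Theorem~\ref{t:sup-un-demi}.} Both bounds will be established by elementary one-step conditional estimates that exploit the fact that, in the Bernoulli case, every offspring displacement lies in $\{0,1\}$. The arguments are independent, one treating the maximum and one treating the minimum of $X^{N}_{n}$.

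For the lower bound on $1-v_{N}(p)$, the plan is to track $M_{n}:=\max X^{N}_{n}$. Since every offspring is located at its parent's position or one unit above, one immediately has $M_{n+1}-M_{n}\in\{0,1\}$, and the event $\{M_{n+1}=M_{n}\}$ occurs iff the $2 j_{n}$ offspring of the $j_{n}\leq N$ particles at level $M_{n}$ all stay put. Conditional on $X^{N}_{n}$, this event has probability $(1-\alpha)^{2 j_{n}}\geq (1-\alpha)^{2N}$, so $\E[M_{n+1}-M_{n}\mid X^{N}_{n}]\leq 1-(1-\alpha)^{2N}$. Telescoping gives $\E[M_{n}]\leq M_{0}+n(1-(1-\alpha)^{2N})$, and Proposition~\ref{p:subadd} yields $v_{N}(p)\leq 1-(1-\alpha)^{2N}$, which is the desired lower bound with $d_{*}(p):=-2\log(1-\alpha)$.

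For the upper bound on $1-v_{N}(p)$, the plan is to track $Y_{n}:=\min X^{N}_{n}$, which is non-decreasing. Let $k_{n}$ denote the number of particles at level $Y_{n}$. The $2(N-k_{n})$ offspring of particles strictly above $Y_{n}$ are automatically at positions $\geq Y_{n}+1$, while among the $2 k_{n}$ offspring of particles at $Y_{n}$, the number at level $Y_{n}+1$ has distribution $\mathrm{Bin}(2 k_{n},\alpha)$. Hence $\{Y_{n+1}=Y_{n}\}$ forces $\mathrm{Bin}(2 k_{n},\alpha)<2 k_{n}-N$. This is impossible for $k_{n}\leq N/2$; for $N/2<k_{n}\leq N$, the mean $2\alpha k_{n}$ exceeds $2k_{n}-N$ by at least $N(2\alpha-1)>0$, so Hoeffding's inequality bounds this probability by $\exp(-(2\alpha-1)^{2}N)$, uniformly in $X^{N}_{n}$. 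Consequently $\E[Y_{n+1}-Y_{n}\mid X^{N}_{n}]\geq 1-\exp(-(2\alpha-1)^{2}N)$, and Proposition~\ref{p:subadd} yields $v_{N}(p)\geq 1-\exp(-(2\alpha-1)^{2}N)$, which is the upper bound with $d^{*}(p):=(2\alpha-1)^{2}$.

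I do not expect a serious obstacle here: the whole argument reduces to one-step binomial estimates combined with Proposition~\ref{p:subadd}. The only mild subtlety is the uniformity in the configuration of the one-step bound on $\P(Y_{n+1}=Y_{n}\mid X^{N}_{n})$, which is handled by the case split at $k_{n}=N/2$. A direct check confirms $0<d^{*}(p)\leq d_{*}(p)<+\infty$ for $1/2<\alpha<1$, in agreement with the form of the statement.
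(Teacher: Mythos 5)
Your proof is correct, and it relies on the same two probabilistic events as the paper's: for the lower bound, all offspring of the rightmost particles staying put (giving the worst-case probability $(1-\alpha)^{2N}$), and for the upper bound, a large-deviations estimate for the binomial count of $+1$ steps among offspring near the bottom. The only structural difference is that you propagate the one-step estimate by a conditional bound uniform over all configurations (hence the case split at $k_n = N/2$) followed by telescoping and the $L^1$ convergence in Proposition~\ref{p:subadd}, whereas the paper invokes sub/superadditivity of $\E(\max X^N_n)$ and $\E(\min X^N_n)$ to reduce to a single branching-selection step from the monochromatic configuration $N\delta_0$ (i.e.\ $j_n = k_n = N$); the two routes are equivalent in substance and yield the same constants.
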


\subsection{Lower bound when $\alpha=1/2$}

It is easily checked that, for all $m \geq 0$, the number 
of particles in the branching-selection system that are located at position $m$ after $m$ steps, that is, $X^{N}_{m}(m)$, 
is stochastically dominated by the total population at the $m-$th
generation of a family of $N$ independent Galton-Watson trees, with offspring distribution binomial$(2,1/2)$.  This corresponds to 
the critical case of Galton-Watson trees, and the probability that such a tree survives up to the $m-$th 
generation is $\leq c m^{-1}$ for some constant $c>0$
and all large $m$. As a consequence, the union bound over the $N$ Galton-Watson trees yields that, for large enough $m$, 
$\P(X^{N}_{m}(m) \geq 1) \leq cNm^{-1}$.
On the other hand, we have by definition that $\E \max(X^{N}_{m}) \leq  m  \P(X^{N}_{m}(m) \geq 1) +  (m-1)  \P(X^{N}_{m}(m) = 0)$.
Choosing $m := A N$, where $A \geq 1$ is an integer, we deduce that, for large $N$,
$    m^{-1}\E \max(X^{N}_{m}) \leq 1 - \textstyle{\frac{1}{AN}}(1-c/A)$. 
Using subadditivity (see the proof of Proposition~\ref{p:subadd}), we have that $v_{N}(p) \leq \E(m^{-1}\max X^{N}_{m})$. 
The lower bound in~(\ref{e:un-demi}) follows by choosing $A>c$. 
\subsection{Upper bound when $\alpha=1/2$}

Given $m\geq 1$, define $U:=\inf \{ n \in \ig 1, m \id ; \ X^{N}_{n}(n) \leq 2N/3 \}$, with the convention that
 $\inf \emptyset := m$. Observe that $\min X^{N}_{U} \geq U-1$, since, by definition, $X^{N}_{U-1}(U-1) \geq 2N/3$, so that, 
 after the branching step applied to  $X^{N}_{U-1}$, the number of particles whose positions are $\geq U-1$ must be  $ \geq 2 \times 2N/3$, whence
 $\geq N$. 
 
Using an argument similar to the proof of Proposition~\ref{p:borne-inf-B}, we deduce that
\begin{equation}\label{e:min-un-demi} v_{N}(p) \geq   1 - \frac{1}{\E(U)}.\end{equation}
 
 The lower bound  in~(\ref{e:un-demi}) is then a direct consequence of the following claim.
{\bf Claim:} for small enough $\epsilon>0$, with $m:=\floor{\epsilon N}$,  there exists $c(\epsilon)>0$ such that
$\E(U) \geq  c(\epsilon)N$ for all large $N$. To prove the claim, introduce for every $x \in \N$ the Markov chain 
$(V^{x}_{k})_{k \geq 0}$ defined by the initial condition $V^{x}_{0}:=x$, and the following transitions: 
given $V^{x}_{0},\ldots, V^{x}_{k}$, the next term $V^{x}_{k+1}$ is the minimum of $N$ and of a random variable with a binomial$(2V^{x}_{k},1/2)$
 distribution. Clearly, the sequences $(V^{N}_{k})_{k \geq 0}$ and  $(X^{N}_{k}(k))_{k \geq 0}$ have the same distribution.
 Moreover,  given two starting points $x,y \in \N$ such that $x \leq y$, one can easily couple $(V^{x}_{k})_{k \geq 0}$ and $(V^{y}_{k})_{k \geq 0}$
in such a way that $V^{x}_{k} \leq V^{y}_{k}$ for all $k \geq 0$. As a consequence,  choosing $x_{N}:=\floor{3N/4}$, we see that $U$ stochastically 
dominates the random variable $T$ defined by $T:=\inf \left\{ n \in \ig 1, m \id ; \ V^{x_{N}}_{n} \leq 2N/3 \right\}$ (again with $\inf \emptyset := m$),
 so that $\P(U=m) \geq P(T=m)$.

 Now let us define yet another Markov chain $(Z_{k})_{k \geq 0}$ by $Z_{0}:=x_{N}$ and the following transitions: 
 given $Z_{0},\ldots, Z_{k}$, the next term $Z_{k+1}$ is a random variable with a binomial$(2Z_{k},1/2)$
 distribution. Clearly we can couple $(Z_{k})_{k}$ and $(V^{x_{N}}_{k})_{k}$ so that they coincide up to one unit of time before 
 the first hitting of $\ig N, +\infty \ig$. As a consequence, the two events  
  $A_{1}:=\{    \sup_{k \in \ig 0 , m \id}     |   V^{x_{N}}_{k}  -  \floor{3N/4}     |   < N/16    \}$
and $A_{2}:=\{    \sup_{k \in \ig 0 , m \id}     |   Z_{k}  -  \floor{3N/4}     |   < N/16    \}$ have the same probability. 
Observing that $(Z_{k})_{k \geq 0}$ is a martingale, we can use Doob's maximal inequality to prove that
  $P \left(  A_{1}^{c}  \right) = P \left(  A_{2}^{c}  \right) \leq E (Z_{m} - \floor{3N/4})^{2}   (N/16)^{-2}$.
  Then, it is easily checked from the definition that $E(Z_{k+1}^{2} | Z_{k}) = Z_{k}^{2} + Z_{k}/2$ for all $k \geq 0$, and, 
  using again the fact that $(Z_{k})_{k \geq 0}$ is a martingale,
   we deduce that  $E (Z_{m} - \floor{3N/4})^{2} \leq m N/2$. 
   As a consequence, we see that, choosing $\epsilon>0$ small enough, 
 we can ensure that   $P \left(  A_{1}^{c}  \right) \leq 1/2008$
 for all large $N$. Since, by definition, $A_{1}$ implies that $T=m$, 
 we finally deduce that, for such an $\epsilon$, and all $N$ large enough, we have that 
 $\P(U = m) \geq P(T=m) \geq 2007/2008$. The conclusion follows.
 
 \subsection{Upper and lower bound when $1/2<\alpha<1$}

As for the lower bound, observe that the probability that all the $2N$ particles generated during a branching step remain at the position from which they
originated is $(1-\alpha)^{2N}$, so that $\E( \max X^{N}_{n} ) \leq n (1 - (1-\alpha)^{2N})$. 
As for the upper bound, observe that, starting from $N$ particles at a site, the number of particles generated from these during a branching step and 
that perform  $+1$ random walk steps has a binomial$(2N,\alpha)$ distribution, whose expectation is $2 \alpha N$, with $2\alpha > 1$. Using a standard large deviations bound
for binomial random variables, we see that the probability for this number to be less than $N$ is $\leq \exp(-c N)$ for some $c>0$. 
 Using superadditivity $\E( \min X^{N}_{n} )$ (derived in exactly the same way as the subadditivity property of $\E( \max X^{N}_{n} )$, see the proof of Proposition~\ref{p:subadd}), 
 it is easy to deduce that $\E( \min X^{N}_{n} ) \geq n (1 - \exp(-c N))$. 
 The result follows.

\section{Discussion}\label{s:discussion}

This section contains a discussion whose goal is to  provide a self-contained qualitative explanation of the $(\log N)^{-2}$ order of magnitude 
appearing in Theorem~\ref{t:BD}. Most of the discussion consists in 
explaining the $\epsilon^{-1/2}$ scaling of $\log \rho(\infty, \epsilon)$, and of $\log \rho(m,\epsilon)$
when $m \propto \epsilon^{-3/2}$, 
in a way that is (hopefully) less technically demanding than the proofs presented 
in~\cite{GanHuShi}, although we follow the proof strategy 
of~\cite{GanHuShi} rather closely. Note that the discussion here deals mostly with the order of magnitude of terms, not with the precise value 
of the constants as in~\cite{GanHuShi}. For the sake of readability, some of the arguments are only 
discussed in a quite informal way.

\subsection{Asymptotic behavior of $\rho(\infty,\epsilon)$ and $\rho(m,\epsilon)$}

\subsubsection{Connection between $\rho(\infty,\epsilon)$ and $\rho(m,\epsilon)$}

A first remark is that the asymptotic behavior of $\rho(\infty,\epsilon)$ can be connected with that of quantities 
of the form $\rho(m,\epsilon)$ under appropriate conditions.
One obvious inequality, valid for all $m \geq 0$, is the following 
\begin{equation}\label{e:compare-evident}\rho(m,\epsilon) \geq \rho(\infty,\epsilon).\end{equation}

In the reverse direction, we have the following.
 \begin{prop}\label{p:comparaison}
 There exist $R < v(p)-1$, $\phi>1$, $r>0$ and $c>0$, depending only on $p$, such that, for all $m \geq 0$, and all $0<\epsilon<1$, the condition
\begin{equation}\label{e:assez-grand}\phi^{q} \rho(m, (1-\alpha) \epsilon) \geq c\end{equation} implies that the following inequality holds 
\begin{equation}\label{e:compare-fini-infini}\rho(\infty,\epsilon)  \geq \frac{r}{2} \rho(m,(1-\alpha) \epsilon),\end{equation}
where $\alpha$ and $\epsilon$ are arbitrary numbers satisfying $0<\alpha<1$ and $\epsilon>0$,   
and
$$q:= \left\lfloor \left( \frac{\alpha \epsilon m}{v(p)-\epsilon-R} \right) \right\rfloor.$$  
\end{prop}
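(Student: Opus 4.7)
The plan is to realize an $(\infty, v(p)-\epsilon)$-good path at the root by concatenating three segments: (i) an $(m, v(p)-(1-\alpha)\epsilon)$-good trunk of length $m$; (ii) a ``slack-absorbing'' intermediate segment of length $q$, produced by a supercritical Galton-Watson subprocess; and (iii) an infinite $(v(p)-\epsilon)$-good tail growing from one of the $\phi^{q}$ descendants produced in step (ii). I fix the constants $R<v(p)-1$, $\phi>1$ and $r>0$ exactly as in Section~\ref{s:upper}, with $R$ chosen so that $p([R,+\infty))\geq 2/3$ and Lemma~\ref{l:athreya-ney} is applied to the binomial$(2,p([R,+\infty)))$ offspring distribution (decreasing $R$ if necessary to enforce $R<v(p)-1$).

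For step (i), since $1-\alpha<1$, any $(m,v(p)-(1-\alpha)\epsilon)$-good path automatically satisfies the stronger slope condition $\Phi(u_{i})\geq (v(p)-\epsilon)i$ for $i\in\ig 0,m\id$, with an overshoot $\Phi(u_{m})-(v(p)-\epsilon)m\geq\alpha\epsilon m$. On the event that the root is $(m,v(p)-(1-\alpha)\epsilon)$-good, I select such a witnessing path deterministically (e.g.\ the leftmost one for the a priori ordering on vertices) and call its endpoint $u_{m}$. Conditional on this choice, the subtree rooted at $u_{m}$ is an independent copy of $\BRW$. For step (ii), inside that subtree I keep only those edges whose increment is $\geq R$: the induced Galton-Watson process is supercritical, and Lemma~\ref{l:athreya-ney} provides at least $\phi^{q}$ vertices $w_{1},\ldots,w_{\phi^{q}}$ at its generation $q$ with probability $\geq r$. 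The definition $q=\lfloor \alpha\epsilon m/(v(p)-\epsilon-R)\rfloor$ is exactly what is needed so that the bound $\Phi(u_{i})\geq\Phi(u_{m})+R(i-m)$, combined with the overshoot $\alpha\epsilon m$, yields $\Phi(u_{i})\geq (v(p)-\epsilon)i$ for every $i\in\ig m,m+q\id$; in particular each $w_{j}$ lies above the slope-$(v(p)-\epsilon)$ line and is reached from the root through a $(v(p)-\epsilon)$-good path.

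For step (iii), each of the $\phi^{q}$ subtrees rooted at $w_{1},\ldots,w_{\phi^{q}}$ is an independent copy of $\BRW$. If any $w_{j}$ is $(\infty,v(p)-\epsilon)$-good within its own subtree, then the full concatenated path is $(\infty,v(p)-\epsilon)$-good from the root (the nonnegative overshoot $\Phi(w_{j})-(v(p)-\epsilon)(m+q)$ only makes the slope condition easier for the tail). By independence, at least one such tail exists with probability $1-(1-\rho(\infty,\epsilon))^{\phi^{q}}$. Multiplying the success probabilities of the three steps yields
$$\rho(\infty,\epsilon)\;\geq\; r\,\rho(m,(1-\alpha)\epsilon)\,\Bigl[1-(1-\rho(\infty,\epsilon))^{\phi^{q}}\Bigr].$$

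The last step is a dichotomy that extracts the desired bound from this self-referential inequality. Writing $a:=\rho(\infty,\epsilon)$ and $b:=\rho(m,(1-\alpha)\epsilon)$: if $\phi^{q}a\geq \log 2$ the bracket is $\geq 1/2$ and $a\geq rb/2$ follows immediately; otherwise $\phi^{q}a<\log 2\leq 1$, and the elementary inequality $1-(1-a)^{k}\geq ka/2$ (valid when $ka\leq 1$) gives $a\geq rb\phi^{q}a/2$, hence $r\phi^{q}b\leq 2$. Choosing $c>2/r$, this contradicts the hypothesis $\phi^{q}b\geq c$, so this second case is ruled out. I expect this closing dichotomy --- and the matching choice of $c$ in hypothesis~(\ref{e:assez-grand}) --- to be the only nontrivial analytical point; the remainder is a standard extend-by-Galton-Watson-and-splice argument for $\BRW$.
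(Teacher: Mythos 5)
Your construction (good trunk to depth $m$, slack-absorbing Galton--Watson layer of thickness $q$ with steps $\geq R$, independent infinite tails from the $\phi^{q}$ endpoints) is exactly the construction the paper uses, and the verification that $q=\lfloor \alpha\epsilon m/(v(p)-\epsilon-R)\rfloor$ makes every vertex up to depth $m+q$ lie above the slope-$(v(p)-\epsilon)$ line is correct. The difference is in how the estimate is closed. The paper iterates the depth-$(m+q)$ block into a genuine Galton--Watson process (generation $n$ sitting at depth $n(m+q)$, offspring of $x$ being the good endpoints at relative depth $m+q$), lower-bounds the probability of $\geq\phi^{q}$ offspring by $r\,\rho(m,(1-\alpha)\epsilon)$, and then applies Lemma~\ref{l:GW-extinct}, which directly gives a \emph{positive} lower bound on the survival probability under the hypothesis~(\ref{e:assez-grand}); survival implies $(\infty,v(p)-\epsilon)$-goodness, so the positive bound transfers to $\rho(\infty,\epsilon)$. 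You instead stop after one block, appeal to $\rho(\infty,\epsilon)$ itself in the tail, and derive the self-referential inequality $a \geq rb\bigl[1-(1-a)^{\phi^{q}}\bigr]$ with $a=\rho(\infty,\epsilon)$, $b=\rho(m,(1-\alpha)\epsilon)$.

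The gap is in the second branch of your dichotomy. When $\phi^{q}a<\log 2$ you write $a\geq rb\,\phi^{q}a/2$ and then cancel $a$ on both sides to get $r\phi^{q}b\leq 2$; that division is only legitimate if $a>0$. But the self-referential inequality is perfectly satisfied by $a=0$ (both sides vanish), so it does not by itself exclude $\rho(\infty,\epsilon)=0$ --- in which case the conclusion $a\geq rb/2>0$ fails even though the hypothesis~(\ref{e:assez-grand}) may hold. Positivity of $\rho(\infty,\epsilon)$ is not free: it is part of what must be proved here, and it is precisely what the paper's Galton--Watson argument delivers through Lemma~\ref{l:GW-extinct} (the fixed-point analysis of the supercritical generating function gives $d<1$, i.e.\ positive survival). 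To repair your argument you should do what the paper does: iterate your one-block construction into a bona fide Galton--Watson process on the sub-lattice of depths $n(m+q)$, note that its offspring distribution $Q$ satisfies $Q([\phi^{q},+\infty))\geq r\,\rho(m,(1-\alpha)\epsilon)$, and then invoke Lemma~\ref{l:GW-extinct} with $a=\phi^{q}$ and $c>(2\log 2)/r$; the dichotomy becomes unnecessary. (A minor cosmetic point: $\phi^{q}$ need not be an integer, so speak of at least $\lceil\phi^{q}\rceil$ descendants.)
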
   

The proof of the above proposition uses the following elementary lemma.
\begin{lemma}\label{l:GW-extinct}
Consider a Galton-Watson process with offspring distribution $Q$. 
If there exists $a \geq 1$ such that $a \times Q([a,+\infty[) \geq 2 \log 2$, then the survival probability is larger than or equal to $Q([a,+\infty[)/2$. 
\end{lemma}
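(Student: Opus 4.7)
The plan is to reduce to a simple binary-type Galton--Watson process via a stochastic domination, then analyze its generating function by an elementary convexity argument.

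Set $p := Q([a,+\infty[)$ and $b := \lceil a \rceil$, so that $b$ is an integer $\geq 1$ and $Q([a,+\infty[) = Q([b,+\infty[) = p$. First I would construct an auxiliary Galton--Watson process with offspring distribution $R := (1-p)\delta_{0} + p\delta_{b}$: each particle in the original process has $\geq b$ children with probability $p$, so truncating these to exactly $b$ (and the remaining cases to $0$) yields a process that is stochastically dominated, generation by generation, by the original. Therefore the survival probability of the auxiliary process is a lower bound for that of the original, and it is enough to show that the auxiliary process has survival probability $\geq p/2$.

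Next I would analyze the auxiliary process through its probability generating function $g(s) = (1-p) + p s^{b}$. Its extinction probability $q$ is the smallest fixed point of $g$ in $[0,1]$, and the standard properties of generating functions give $g(s) > s$ on $[0,q)$. Consequently, to conclude $q \leq 1 - p/2$ (equivalently, survival probability $\geq p/2$), it suffices to verify the single inequality
\begin{equation*}
g(1 - p/2) \;\leq\; 1 - p/2.
\end{equation*}

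A direct computation reduces this to $(1 - p/2)^{b} \leq 1/2$, i.e.\ $b \log\bigl(1/(1-p/2)\bigr) \geq \log 2$. Using the elementary bound $-\log(1-x) \geq x$ for $x \in [0,1)$, it is enough that $bp/2 \geq \log 2$, and this follows from the hypothesis since $bp \geq ap \geq 2\log 2$. There is no real obstacle here; the only point deserving care is confirming that the coupling/truncation indeed produces a stochastic minorant of the original process, and that one may pass from $a$ to the integer $b = \lceil a \rceil$ without losing the hypothesis (which is automatic because $Q([a,+\infty[) = Q([b,+\infty[)$ and $b \geq a$).
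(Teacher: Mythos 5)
Your proof is correct and follows essentially the same route as the paper's: truncate the offspring distribution to the two-point law $(1-p)\delta_0 + p\delta_b$, observe that this stochastically minorizes the original process, and then check the single inequality $g(1-p/2)\le 1-p/2$ for the generating function $g(s)=(1-p)+ps^b$ to bound the extinction probability. The only difference is that you are slightly more careful than the paper about passing from a possibly non-integer $a$ to $b=\lceil a\rceil$ and about spelling out the elementary estimate $(1-p/2)^b\le 1/2$, both of which the paper glosses over.
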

\begin{proof}[Proof of Lemma~\ref{l:GW-extinct}]
Let $g(s):=\sum_{k=0}^{+\infty} Q(k) s^{k}$ for $s \in [0,1[$. 
By coupling, it is enough to prove the result under the additional assumption that only the values $0$ and $a$ have non-zero probability with respect to $Q$, so we may assume  
that $g(s)=1-Q(a)+s^{a} Q(a)$. 
Since $a Q(a) > 1$, we have a super-critical Galton-Watson process, and, from standard  theory, we know that the extinction probability $d$ of the process 
is the unique solution in $[0,1[$ of the equation $g(d)=d$, with $g(s) > s$ for $s \in ]0,d[$ and $g(s)<s$ for $s \in ]d,1[$.
Our assumption that $a \times Q(a) \geq 2\log 2$ easily yields the fact that $g(1-Q(a)/2) \leq 1-Q(a)/2$, whence the fact that $d$ must be $\leq 1-Q(a)/2$.
 The result follows.
\end{proof}

\begin{proof}[Proof of Proposition~\ref{p:comparaison}]

Consider the values of $R, \phi, r$ defined in Section \ref{s:upper}, in the argument following Lemma  \ref{l:athreya-ney}.
 Then consider 
a descending path $root=u_{0},\ldots, u_{m+q} \in \TT$ such that $\Phi(u_{i}) \geq (v(p)-(1-\alpha)\epsilon) i$ for all $i \in \ig 0,m\id$, and $\Phi(u_{i+1})-\Phi(u_i) \geq R$ for all 
$i \in \ig m, m+q-1 \id$. 
We see from the definition of $q$ that $\Phi(u_{i}) \geq (v(p)-\epsilon) i$  for all $i \in \ig 0,m+q\id$. 

We now define a Galton-Watson branching process of vertices of $\TT$ in 
which, for all $n$, the $n-$th generation of the process is formed by vertices in $\TT((m+q)n)$. 
First, the zero-th generation of the process is formed by the root of $\TT$. Then, given 
a vertex $x \in \TT((m+q)n)$ belonging to the $n-$th generation of the process, the offspring of this vertex in the branching process is formed by all the endpoints $y$ 
of descending paths $x=:u_{0},\ldots, u_{m+q}:=y$ in $\TT$ 
such that  $\Phi(u_{i})-\Phi(u_{0}) \geq (v(p) - \epsilon) i$ 
for all $i \in \ig 0,m+q \id$. From the definition of the branching mechanism of $\BRW$, we see that we have defined
a Galton-Watson branching process.  Now, re-doing the argument following Lemma~\ref{l:athreya-ney} in Section~\ref{s:upper},  
we see that the offspring distribution of this branching process gives at least $\phi^{q}$ 
children with probability at least $\rho(m,(1-\alpha)\epsilon) r$. 
On the other hand, the definition of our branching process shows that if it never goes extinct, the root of $\TT$ is $(\infty,v(p)-\epsilon)-$good.
As a consequence, the survival probability of our process is a lower bound for $\rho(\infty,\epsilon)$. The result then follows from Lemma~\ref{l:GW-extinct}, choosing
$c>(2 \log 2)/r$.
\end{proof}

We conclude this section by the following remark: using the above results, it is possible to deduce the conclusion of Theorem~\ref{t:borne-bien} from the conclusion of 
Theorem~\ref{t:borne-bien-original}. In~\cite{GanHuShi}, Theorem~\ref{t:borne-bien} is in fact an intermediate step in the proof of 
Theorem~\ref{t:borne-bien-original}, so our remark does not lead to an alternative way of proving~Theorem \ref{t:borne-bien} from first principles. However, its interest is to show that, 
as soon as $\epsilon^{1/2} \log \rho(\infty,\epsilon)$ converges to some limit, this limit can be approached arbitrarily closely by  expressions of the form 
$\epsilon^{1/2} \log \rho(m,\epsilon)$, with $\epsilon = \theta/m^{2/3}$ for some large enough constant $\theta$.

\begin{proof}[Proof of Theorem~\ref{t:borne-bien} from the conclusion of Theorem~\ref{t:borne-bien-original}] 
Let $0<\alpha<1$, $\theta>0$ and $m\ge 1$. 
Set $\epsilon:=\theta/m^{2/3}$.
Let $R,\phi, r, c, q$ be defined as in the statement of Proposition \ref{p:comparaison}.
As $m$ goes to infinity, we have that
$
\log \phi^q \sim \theta\alpha \log(\phi) (v(p)-R)^{-1} m^{1/3}
$, 
and, by Theorem~\ref{t:borne-bien-original},
$$
\log \rho(\infty,(1-\alpha)\epsilon) \sim -\chi(p)^{1/2}\theta^{-1/2}(1-\alpha)^{-1/2}m^{1/3}.
$$
Therefore, provided that $\theta$ has been chosen large enough,  Inequality~(\ref{e:assez-grand})  holds
for large enough $m$. Given such $\theta$ and $m$, Proposition~\ref{p:comparaison} and Theorem~\ref{t:borne-bien-original} yield that
$$
\rho(m,(1-\alpha)\epsilon) \le (2/r) \rho(\infty,\epsilon) = (2/r) \exp\left(-\left[\frac{\chi(p)+o(1)}{\epsilon} \right]^{1/2}\right).
$$
Setting $\widetilde{\epsilon}:=(1-\alpha)\theta m^{-2/3}$, one obtains that
$$
\rho(m,\widetilde{\epsilon}) \le \exp\left(-\left[\frac{\chi(p)(1-\alpha)+o(1)}{\widetilde{\epsilon}} \right]^{1/2}\right).
$$
Since $\alpha$ can be chosen arbitrarily small, the conclusion follows.
\end{proof}
 
\subsubsection{Strategy and results}

Given the results of the previous section, the strategy consists in studying the order of magnitude of 
 $\log \rho(m,\epsilon)$, when $m$ has the scaling form $m \propto \epsilon^{-u}$ for some $u>0$, seeking a value of $u$
 such that~(\ref{e:assez-grand}) is satisfied. We shall see that $\log \rho(m,\epsilon) \propto \epsilon^{-h(u)}$, with $h(u)=u/3$ for all $0 <u \leq 3/2$, while the integer $q$
 in Proposition~\ref{p:comparaison} satisfies $q \propto \epsilon^{-(u-1)}$ for all $u>1$. For $u^{*}:=3/2$, the identity $h(u^{*})=u^{*}-1$ holds, 
 so that $q$ and $\log \rho(m,(1-\alpha)\epsilon)$ have the same order of magnitude and~(\ref{e:assez-grand}) can be satisfied. 
 Then~(\ref{e:compare-evident}) and~(\ref{e:compare-fini-infini}) imply that
$\log \rho(\infty,\epsilon) \propto -\epsilon^{-h(u^{*})} = -\epsilon^{-1/2}$, and (using the fact that $h$ is non-decreasing), $h(u)=h(u^{*})$ for all $u \geq u^{*}$.
Thus, the $\epsilon^{-1/2}$ scaling exponent is "explained" by $3/2$ being the solution of the equation $h(u^{*})=u^{*}-1$.

We deduce from these results  the existence of two distinct regimes for $\rho(m,\epsilon)$ with $m \propto \epsilon^{-u}$:
\begin{itemize}
\item when $0<u \leq 3/2$, $\log(\rho(m,\epsilon)) \propto -m^{1/3}$;
\item when $u \geq 3/2$, $\log(\rho(m,\epsilon)) \propto \log(\rho(\infty,\epsilon)) \propto -\epsilon^{-1/2}$.
\end{itemize}

\subsubsection{Asymptotics of $\log \rho(m,\epsilon)$ with $m \propto \epsilon^{-u}$, $0<u<3/2$}

Let us now explain how to compute $h(u)$, and assume throughout this section that $m \propto \epsilon^{-u}$, whence $\epsilon m \propto \epsilon^{1-u}$. 
A key idea is to perform a change of measure, replacing the step distribution $p$ of the $\BRW$ by the distribution $\tilde{p}$ defined by (see Section~\ref{ss:main})
$$\frac{d \tilde{p}}{d p}(x) := \frac{\exp{(t^{*}x)}}{\exp(\Lambda(t^{*}))}.$$ The mean value of a step with respect to $\tilde{p}$
is now equal to $v(p)$, and, if  $(S_{k})_{k \geq 0}$ denotes a random walk started at $S_{0}:=0$, with  i.i.d. increments whose common distribution is $p$ with respect to a probability measure $P$, 
and $\tilde{p}$ with respect to a probability measure $\tilde{P}$, the following identity holds for all $k$:
\begin{equation}\label{e:change-de-mes}2^{k}P\left[ (S_{0},\ldots, S_{k}) \in \cdot  \right]  = \tilde{E}\left[ e^{-t^{*}(S_{k} - v(p)k)}  
\un(S_{0},\ldots, S_{k}) \in \cdot  \right],\end{equation} 
where $\tilde{E}$ denotes expectation with respect to $\tilde{P}$. 
The raison d'\^etre of Assumption (A3) is to allow for such a change of measure.

Remember that $\rho(m,\epsilon)$ is the probability that at least one descending path $root=:u_{0},u_{1},\ldots, u_{m}$ exists in $\TT$ such that 
\begin{equation}\label{e:condition-1}\mbox{ for all $i \in \ig  0,m\id$, } \Phi(u_{i}) \geq (v(p)-\epsilon) i.\end{equation}
Observe that, for such a path, either 
\begin{equation}\label{e:condition-2}\mbox{ for all $i \in \ig  0,m\id$, } \Phi(u_{i}) \leq v(p)i + \epsilon^{-u/3},\end{equation}
or
\begin{equation}\label{e:condition-3}\mbox{ there exists  $i \in \ig  0,m\id$ such that } \Phi(u_{i}) > v(p)i + \epsilon^{-u/3}.\end{equation}
Denoting by $\Xi_{m}$ the (random) number of descending paths satisfying~(\ref{e:condition-1}) and~(\ref{e:condition-2}), and by  
$\Delta_{m}$ the number of those satisfying~(\ref{e:condition-1}) and~(\ref{e:condition-3}), we see that 
$$  \rho(m,\epsilon) = \Q(  \{  \Xi_{m} \geq 1    \} \cup  \{  \Delta_{m} \geq 1 \}),$$
 so that, obviously,    
\begin{equation}\label{e:compare-prob-esp}\Q(\Xi_{m} \geq 1)  \leq \rho(m,\epsilon) \leq  \E(\Xi_{m}) +  \Q(\Delta_{m} \geq 1).\end{equation}

By definition, $$\E(\Xi_{m}) = 2^{m} P(  \mbox{ for all $i \in \ig  0,m\id$, $v(p) i - \epsilon i \leq S_{i} \leq v(p) i + \epsilon^{-u/3}$}),$$
which rewrites, using~(\ref{e:change-de-mes}), as  
\begin{equation}\label{e:change-de-mes-en-action}\tilde{E}\left[ e^{-t^{*}(S_{m} - v(p)m)}  \un(\mbox{ for all $i \in \ig  0,m\id$, $v(p) i - \epsilon i \leq S_{i} \leq v(p) i + \epsilon^{-u/3}$}) \right].\end{equation}

Since the only paths that contribute to the above expectation have $$v(p) m - \epsilon m \leq S_{m} \leq v(p)m + \epsilon^{-u/3},$$ we see that
\begin{equation}\label{e:borne-partie-exp}\E(\Xi_{m}) \leq e^{t^{*} \epsilon m } \tilde{P}\left[\mbox{ for all $i \in \ig  0,m\id$, $v(p) i - \epsilon i \leq S_{i} \leq v(p) i + \epsilon^{-u/3}$ }\right].\end{equation} 
and
\begin{equation}\label{e:borne-partie-exp-2}\E(\Xi_{m}) \geq e^{-t^{*} \epsilon^{-u/3} } \tilde{P}\left[\mbox{ for all $i \in \ig  0,m\id$, $v(p) i - \epsilon i \leq S_{i} \leq v(p) i + \epsilon^{-u/3}$ }\right].\end{equation} 

 Now observe that, under $\tilde{P}$, $(S_{k} - v(p) k)_{k \geq 0}$ is a random walk with centered square-integrable increments. Moreover,  
 $(\epsilon^{-u/3})^{2}<<m$ since $u>0$, and, $\epsilon^{-u/3}>>\epsilon m$ as soon as $u<3/2$. As a consequence, 
 the usual Brownian scaling for random walks yields that when $0<u<3/2$,  
\begin{equation}\label{e:borne-partie-rw} \log \tilde{P}\left[\mbox{ $ \forall i \in \ig  0,m\id$, $v(p) i - \epsilon i \leq S_{i} \leq v(p) i + \epsilon^{-u/3}$ }\right] 
\propto -\frac{\epsilon^{-u}}{(\epsilon^{-u/3})^{2}} = -\epsilon^{-u/3}.\end{equation}
Since $\epsilon^{-u/3}>>\epsilon m$ when $u<3/2$, we deduce from~(\ref{e:borne-partie-exp}) and~(\ref{e:borne-partie-exp-2}) that
  \begin{equation}\label{e:borne-esp-xi}\log \E(\Xi_{m}) \propto -\epsilon^{-u/3} \mbox{ for $1 < u < 3/2$}.\end{equation}

As for $\Delta_{m}$, the union bound yields that 
$$\Q(\Delta_{m} \geq 1) \leq \sum_{i=0}^{m} \Q(  \exists x \in \TT(i); \ \Phi(x) > v(p)i + \epsilon^{-u/3}),$$
whence
$$\Q(\Delta_{m} \geq 1) \leq \sum_{i=0}^{m}  2^{i} P( S_{i} > v(p)i + \epsilon^{-u/3}).$$
For all $i \in \ig 0,m \id$, the change of measure shows that
$2^{i} P( S_{i} > v(p)i + \epsilon^{-u/3}) = \tilde{E}\left[ e^{-t^{*}(S_{i} - v(p)i)}  \un( S_{i} > v(p) i + \epsilon^{-u/3}) \right] \leq \exp(-t^{*} \epsilon^{-u/3})$,
so we easily deduce that
\begin{equation}\label{e:borne-proba-depassement}-\log \Q(\Delta_{m} \geq 1) 
\mbox{ is at least } \propto -\epsilon^{-u/3}.\end{equation}

We conclude from~(\ref{e:borne-esp-xi}), ~(\ref{e:borne-proba-depassement}) and~(\ref{e:compare-prob-esp}) that 
\begin{equation}\label{e:une-moitie}-\log \rho(m,\epsilon) \mbox{ is at least }\propto \epsilon^{-u/3} \mbox{ for }0<u<3/2.\end{equation}

Using the same kind of argument that led to~(\ref{e:borne-esp-xi}), but working a little more (we omit the details), it is possible to show  that 
 \begin{equation}\label{e:borne-esp-xi-carre}\log \E(\Xi^{2}_{m}) \mbox{ is at most }\propto  \epsilon^{-u/3} \mbox{ for $0 < u < 3/2$}.\end{equation}

Then, we can use the classical second moment inequality:
$$\Q( \Xi_{m}>0) \geq \frac{\E(\Xi_{m})^{2}}{\E(\Xi_{m}^{2})},$$
to deduce from~(\ref{e:compare-prob-esp}), ~(\ref{e:borne-esp-xi}) and~(\ref{e:borne-esp-xi-carre}) that 
\begin{equation}\label{e:une-autre-moitie}-\log \rho(m,\epsilon) \mbox{ is at most }\propto -\epsilon^{-u/3} \mbox{ for }0<u<3/2.\end{equation}

As a consequence, we obtain that 
\begin{equation}\log \rho(m,\epsilon) \propto  -\epsilon^{-u/3} \mbox{ for }0<u<3/2.\end{equation}

\subsubsection{Asymptotics of $\rho(m,\epsilon)$ with $m \propto \epsilon^{-u}$, $u\ge 3/2$, and $\rho(\infty,\epsilon)$}\label{s:u-critique}

The above discussion dealt only with rough order of magnitudes (denoted by the $\propto$ symbol), but a more precise analysis is needed to study
the competition between positive and negative terms of similar orders of magnitude when $u=3/2$. 

For $\lambda>0$, let $m:=\floor{ \lambda \epsilon^{-3/2} }$ and 
$$f^{+}(\lambda):=\limsup_{\epsilon \to 0} - \epsilon^{1/2} \log \rho(m, \epsilon), \ f^{-}(\lambda):=\liminf_{\epsilon \to 0} - \epsilon^{1/2} \log \rho(m, \epsilon).$$
 
From the monotonicity property: 
$\rho(m,\epsilon) \geq \rho(m',\epsilon)$ when $m' \geq m$, we deduce that $\lambda \mapsto f^{+}(\lambda)$ and $\lambda \mapsto f^{-}(\lambda)$ are non-decreasing.
Similarly, from the monotonicity property $\rho(m,\epsilon) \leq \rho(m,\epsilon')$ when $\epsilon \leq \epsilon'$, we deduce that
 $\lambda \mapsto \lambda^{-1/3} f^{+}(\lambda)$ and $\lambda \mapsto \lambda^{-1/3} f^{-}(\lambda)$ are non-increasing.

In particular, the fact that there exists some $\lambda$ for which $f^{+}(\lambda)$ is finite (resp. positive) implies that $f^{+}(\lambda)$ is finite (resp. positive) for all $\lambda>0$. 
The same property holds for $f^{-}$.

Now, rework the bounds in the previous section, replacing the $\epsilon^{-u/3}(=\epsilon^{-1/2}$ since $u=3/2$) terms 
in the definition of $\Xi_{m}$ and $\Delta_{m}$, by 
$\lambda \epsilon^{-1/2}$ (more precision than only the order of magnitude of terms is needed in order to deal with the case $u=3/2$).
 Consider the analog of the bound~(\ref{e:borne-partie-exp}) in the present context, and observe that, for small $\epsilon$, $\epsilon m \sim \lambda \epsilon^{-1/2}$. 
 The Brownian scaling bound then yields the existence of a constant $c>0$ such that,
  for small $\epsilon$, 
  $$\log \tilde{P}\left[\mbox{ $ \forall i \in \ig  0,m\id$, $v(p) i - \epsilon i \leq S_{i} \leq v(p) i + \lambda \epsilon^{-1/2}$ }\right] 
  \leq -c \frac{m}{(\lambda \epsilon^{-1/2})^{2}} \sim -c \lambda^{-1} \epsilon^{-1/2}.$$ 
  For small enough $\lambda$, this term dominates the $t^{*} \epsilon m \sim t^{*} \lambda \epsilon^{-1/2}$ term in the exponential, so that $f^{-}(\lambda)>0$. We deduce that
   $f^{-}(\lambda)>0$ for all values of $\lambda>0$. 
    On the other hand, it is  straightforward to adapt the estimates in the previous section to show that $f^{+}(\lambda)<+\infty$ for all $\lambda>0$. 
We can thus conclude that, when $m \propto \epsilon^{-3/2}$,
\begin{equation}\label{e:bientot-la-fin}\log \rho(m,\epsilon) \propto -\epsilon^{-1/2}.\end{equation}

Now, by Proposition~\ref{p:comparaison}, the asymptotic scaling
$\log \rho(\infty,\epsilon) \propto -\epsilon^{-1/2}$ is a consequence of~(\ref{e:bientot-la-fin}),
provided that~(\ref{e:assez-grand}) is satisfied for $u=3/2$ and, at least, large enough $\lambda$. 
Observe that, on the one hand, $q \sim \frac{\alpha \epsilon m}{v(p)-R} \sim \frac{\alpha \lambda \epsilon^{-1/2}}{v(p) -R }$. 
On the other hand, $ \log \rho(m,(1-\alpha)\epsilon) \gtrsim - f^{+}(\lambda (1-\alpha)^{3/2})   ((1-\alpha) \epsilon)^{-1/2}$. 
The fact that $\lambda \mapsto \lambda^{-1/3} f^{+}(\lambda)$ is non-increasing and thus bounded above for large $\lambda$, 
implies that $\phi^{q} \rho(m,(1-\alpha)\epsilon) >> 1$ for large enough $\lambda$, so that~(\ref{e:assez-grand}) is indeed satisfied.

\subsection{Deducing the Brunet-Derrida behavior}

Broadly speaking, our proof of the Brunet-Derrida behavior of branching-selection systems is based on the fact that there is a loose equivalence between the following two properties:
 \begin{equation}\label{e:borne-vitesse-vague-2} \BRW_1,\ldots,\BRW_N \mbox{ do not survive killing  below a line of slope } v-\epsilon,\end{equation}
and
\begin{equation}\label{e:borne-vitesse-vague} v_N < v-\epsilon. \end{equation}
If one accepts this premise, it is then natural to expect the actual velocity shift $\epsilon_N := v(p) - v_N$ to satisfy 
\begin{equation}\label{e:ordre-epsilon}\rho(\infty,\epsilon_N) \propto 1/N.\end{equation}
Indeed, since $\BRW_1,\ldots,\BRW_N$ are independent, $\rho(\infty,\epsilon_N)>>1/N$ would imply that, with probability close to one,  at least one of the $\BRW_i$s survives killing, while  $\rho(\infty,\epsilon_N)<<1/N$ would imply that, with probability close to one, none of the $\BRW_i$s survives.
Using the asymptotics $$\log \rho(\infty, \epsilon) \sim -\chi(p)^{1/2} \epsilon^{-1/2},$$
it is then easily checked that~(\ref{e:ordre-epsilon}) imposes the precise asymptotic behavior
$$\epsilon_N \sim \chi(p) (\log N)^{-2}.$$

To give an intuition of why~(\ref{e:borne-vitesse-vague-2}) and~(\ref{e:borne-vitesse-vague}) should be related, remember the coupling between the branching-selection particle system and $ \BRW_1,\ldots,\BRW_N$ described in Section~\ref{ss:cwafonbrw}. If~(\ref{e:borne-vitesse-vague-2}) holds, then, loosely speaking, $v(p)-\epsilon$ is above the sustainable growth speed for a branching system with only $N$ particles available, so the maximum of the branching-selection system should grow at a speed lower than $v(p)-\epsilon$. Conversely, if~(\ref{e:borne-vitesse-vague-2}) holds, the population in $ \BRW_1,\ldots,\BRW_N$ above the line with slope $v(p)-\epsilon$ quickly exceeds $N$ since the existing surviving particles quickly yield many surviving descendants. As a consequence, the threshold in the selection steps of the branching-selection system has to be above the line with slope $v(p)-\epsilon$.

A rigorous formulation  of the preceding arguments is  precisely what we do in Sections \ref{s:lower} and \ref{s:upper}.

It should be noted that the key time scale over which we have to control the particle system to prove both the upper and the lower bound is $\epsilon^{-3/2}$, 
with $\epsilon$ satisfying~(\ref{e:ordre-epsilon}), that is, a time scale of order $(\log N)^{3}$. 
This is the same order of magnitude as the one observed for coalescence times of the genealogical process underlying the branching-selection particle system 
(this question is investigated empirically and with heuristic arguments in e.g.~\cite{BruDerMueMun}). Understanding more precisely the role of this time-scale for the dynamics of the particle system certainly deserves more investigation.

\bibliographystyle{plain}
\bibliography{BD-particle}

\end{document}